\documentclass[a4paper,oneside,11pt]{article} 

\usepackage[utf8]{inputenc} 
\usepackage[T1]{fontenc} 
\usepackage{textcomp} 
\usepackage[english]{babel} 
\usepackage{indentfirst}
\usepackage[autolanguage]{numprint} 
\usepackage{hyperref} 
\hypersetup{colorlinks=true,linkcolor=blue,citecolor=red,urlcolor=blue}
\usepackage{graphicx} 
\usepackage{verbatim} 
\usepackage{makeidx} 
\usepackage[refpage]{nomencl} 
\usepackage{enumitem} 
\usepackage{tikz} 
\usetikzlibrary{matrix,arrows,patterns}
\usepackage{multicol}
\usepackage{fullpage}
\usepackage{comment}
\usepackage{stmaryrd}
\usepackage{ulem}
\usepackage{mathrsfs}
\usepackage{mathtools}
\usepackage[hang,flushmargin]{footmisc}

\usepackage{amsmath,amssymb,amsthm,amsfonts} 
\usepackage[all]{xy} 

\numberwithin{equation}{section}

\newcounter{Main}

\theoremstyle{plain} 

\newtheorem{MainThm}[Main]{Theorem} 

\swapnumbers 
\theoremstyle{definition} 
\newtheorem{Def}{Definition}[section] 
\newtheorem{Def,Thm}[Def]{Definition and theorem} 
\newtheorem{Def,Prop}[Def]{Proposition-definition} 

\theoremstyle{plain} 
\newtheorem{Prop}[Def]{Proposition} 
\newtheorem{Lemma}[Def]{Lemma} 

\newtheorem{Theorem}[Def]{Theorem} 
\newtheorem{Corollary}[Def]{Corollary} 
\theoremstyle{remark} 
\newtheorem{Example}[Def]{Example} 
\newtheorem{Remark}[Def]{Remark} 

\newcommand{\x}{\mathrm{x}}
\newcommand{\A}{A_d}

\usepackage{xcolor}
\usepackage[all]{xy}
\newcommand{\red}[1]{{\color{red}#1}}

\newcommand{\w}{\varpi}

\newcommand\sbmattrix[4]{\textnormal{\scriptsize$\left(\begin{array}{cc}#1&#2\\#3&#4\end{array}\right)$\normalsize}}

\title{On some Diophantine and Ergodic properties of the Schneider map over function fields} 
\author{Matias Alvarado\footnote{Universidad de Talca, Instituto de Matemáticas, Talca, Chile. Email: \url{matias.alvarado@utalca.cl}}, \, Nicol\'as Ar\'evalo-Hurtado\footnote{Universidad Escuela Colombiana de Ingenier\'ia Julio Garavito, Bogot\'a, Colombia.
Email: \url{nicolas.arevalo-h@escuelaing.edu.co}}, \, and Claudio Bravo\footnote{Universidad de Talca, Instituto de Matemáticas, Talca, Chile. Email: \url{claudio.bravo@utalca.cl}}} 

\date{}

\begin{document} 

\maketitle

\begin{abstract}
We introduce and study continued fractions defined by Schneider-like maps over polynomial rings, where the maps are associated with a fixed polynomial of arbitrary degree.
In particular, we prove the existence and uniqueness of the continued fraction expansion for every element of the field of Laurent series. We then establish precise Diophantine approximation properties of the corresponding convergents.
We also study the dynamical aspects of the underlying map, proving that the Haar measure is invariant and ergodic. As an arithmetic consequence, we determine the asymptotic sum of the digits of the expansion for almost every element. Finally, we identify the set of elements that are worst approximable in this framework and compute its Hausdorff dimension, showing that it is a fractal set of positive dimension.
\end{abstract}

\section{Introduction}\label{section introduction}

The theory of continued fractions has a central role in number theory, serving as the primary tool for studying rational approximation of real and
$p$-adic numbers alike. In the classical archimedean setting, every irrational real number admits a unique expansion as an infinite simple continued fraction, whose convergents furnish the best rational approximations.
To be precise, each $\alpha \in \mathbb{R}$ is the limit of a unique sequence of the form 
\begin{equation}\label{eq0}
[a_0, \dots, a_N]:=a_0+\frac1{a_1+\frac{1}
{\ddots\frac1{a_{N-1}+\frac{1}{a_N}}}},
\end{equation}
where $N\in \mathbb{Z}_{\geq 0}$, $a_0 \in \mathbb{Z}$ and 
$a_1, \dots , a_N \in \mathbb{Z} \smallsetminus \lbrace 0 \rbrace$. 
This result can be interpreted in terms of the action of 
$\mathrm{SL}_2(\mathbb{Z})$ on the Poincar\'e upper half-plane endowed 
with the dual of the Farey tessellation (\cite{Ford,Series}).
Further connections between the development of real numbers in continued fractions and the ergodic theory of the Gauss map have been thoroughly developed and constitute a cornerstone of metric number theory; see for instance the monograph by Dajani and Kraaikamp~\cite{DK02}.

The analogue of continued fractions over function fields was introduced in earnest in the early works of Baum and Sweet \cite{BS76} and further developed by Mills and Robbins \cite{MR86}. In this setting, one replaces the ring of integers by a polynomial ring $\mathbb{F}_q[t]$ over a finite field and the field of real numbers by the field of formal Laurent series $\mathbb{F}_q(\!(t^{-1})\!)$. 
This construction extends to arbitrary ground fields. Specifically, Schmidt in \cite{Sc00} established the following result:

\begin{MainThm}\label{artincontinuedfractions}
Each $z\in F(\!(t^{-1})\!)$ 
can be developed in a unique continued fraction with polynomial 
coefficients, i.e., $z$ is the limit of a unique sequence of 
the form $[f_0, \dots, f_N]$ as defined above \eqref{eq0},
where $f_0 \in F[t]$ and 
$f_n \in F[t]\smallsetminus F$, for $n>0$.
Moreover, if we write 
$[f_0, f_1, \dots, f_n]=p_n/q_n$, 
with $p_n,q_n \in F[t]$ coprime, then:
$$ \nu\left(z-\frac{p_n}{q_n}\right)=\deg(f_{n+1})+2\deg(q_n).$$
In other words, we have: 
$$ \Bigg\vert z-\frac{p_r}{q_r} \Bigg\vert= \frac{ 1}{\vert f_{r+1} \vert\vert q_r \vert^2 }. $$
\end{MainThm}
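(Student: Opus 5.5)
We construct the expansion by the Artin algorithm and then read off the approximation property from the standard determinant identities. Throughout, $\nu$ is the valuation $\nu(z)=-\deg z$ on $F(\!(t^{-1})\!)$, and $\vert\cdot\vert=e^{-\nu}$.

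\textbf{Existence.} For $z=\sum_{i\le n} c_i t^i$, put $\lfloor z\rfloor=\sum_{0\le i\le n}c_it^i\in F[t]$. Then $\{z\}:=z-\lfloor z\rfloor$ satisfies $\nu(\{z\})\ge 1$, and the map $z\mapsto\lfloor z\rfloor$ is the unique decomposition $F(\!(t^{-1})\!)=F[t]\oplus t^{-1}F[[t^{-1}]]$. Set $z_0=z$ and $f_n=\lfloor z_n\rfloor$. If $\{z_n\}\neq0$, set $z_{n+1}=1/\{z_n\}$; otherwise stop. Since $\nu(\{z_n\})\ge1$, we get $\nu(z_{n+1})\le -1$, so $f_{n+1}=\lfloor z_{n+1}\rfloor$ has degree $\ge1$, i.e.\ $f_{n+1}\notin F$. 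Define $p_n,q_n$ by the usual recursions $p_n=f_np_{n-1}+p_{n-2}$ and $q_n=f_nq_{n-1}+q_{n-2}$, with $p_{-1}=1$, $q_{-1}=0$, $p_{-2}=0$, $q_{-2}=1$. Induction then gives:
\begin{itemize}
\item $z=\dfrac{z_{n+1}p_n+p_{n-1}}{z_{n+1}q_n+q_{n-1}}$;
\item $p_nq_{n-1}-p_{n-1}q_n=(-1)^{n-1}$, which yields coprimality;
\item $\deg q_n=\sum_{i=1}^n\deg f_i$, strictly increasing, because $\deg f_i\ge1$ means no cancellation can occur in the recursion.
\end{itemize}

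\textbf{Approximation formula.} From the first identity,
$$z-\frac{p_n}{q_n}=\frac{(-1)^n}{q_n(z_{n+1}q_n+q_{n-1})}.$$
Since $\deg z_{n+1}=\deg f_{n+1}\ge 1$ and $\deg q_{n-1}<\deg q_n$, the ultrametric inequality gives $\vert z_{n+1}q_n+q_{n-1}\vert=\vert f_{n+1}\vert\vert q_n\vert$. This proves both displayed formulas. Because $\deg q_n\to\infty$ whenever the expansion is infinite, it also shows that $p_n/q_n\to z$. When the expansion terminates, $z$ equals a finite $[f_0,\dots,f_N]$.

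\textbf{Uniqueness.} This is the step needing care, since "limit of a sequence" must be reconciled with the digits. Suppose $z=\lim[g_0,\dots,g_N]$ for some admissible sequence, meaning $g_0\in F[t]$ and $g_n\notin F$ for $n>0$. The key observation is that, for admissible digits, the tail $w=[g_1,g_2,\dots]$ (a finite value or a limit) satisfies $\nu(w)=-\deg g_1\le -1$. This holds for finite expansions by induction: $[g_1,\dots,g_N]=g_1+1/[g_2,\dots]$, and the second term has positive valuation. It passes to limits, because the valuation is eventually constant along a convergent sequence with a nonzero limit. Hence $z=g_0+1/w$ with $\nu(1/w)\ge1$, which forces $g_0=\lfloor z\rfloor$ and $w=z_1$. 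Iterating the argument gives $g_n=f_n$ for all $n$. It also shows that a finite expansion cannot coincide with an infinite one, since $\{z_n\}=0$ exactly when the tail vanishes.

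The main obstacle is only the bookkeeping of valuations in the limit step. Continuity of the continued-fraction maps is justified by the approximation estimate: it shows that the finite truncations of any admissible sequence form a Cauchy sequence.
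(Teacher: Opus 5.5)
Your proof is correct and complete. It is the standard Artin-algorithm argument. One small point: the identity $[f_0,\dots,f_n]=p_n/q_n$, which you use tacitly, follows from the same induction as your first bullet with $z_{n+1}$ replaced by an indeterminate.

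Note that the paper does not prove this statement at all. It is quoted from Schmidt \cite{Sc00} as background. The paper's own arguments are for its generalized expansions (Theorems~\ref{main theo 1} and~\ref{main theo 2}), and they take a different route.

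\textbf{Approximation formula.} The paper does not use the complete-quotient identity $z-p_n/q_n=(-1)^n/\bigl(q_n(z_{n+1}q_n+q_{n-1})\bigr)$. Instead it shows by induction that $\nu(z-\x_r(z))=\nu\bigl(\rho_r(z)/\rho_r'(z)\bigr)$, where $\rho_r=\sigma_r\circ\cdots\circ\sigma_1$ is the composed M\"obius map. It then evaluates the right-hand side by the chain rule (Lemma~\ref{lemma log derivate and degrees}). The version in terms of denominators is obtained separately, through a matrix lifting and an inductive computation of $\nu(a_r)$ with a case split on $n_r$.

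\textbf{Uniqueness.} The paper derives it from a digit-stability statement (Lemma~\ref{lemma equal cf}): two sufficiently close elements share their first digits.

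\textbf{Comparison.} Your single identity delivers several things at once:
\begin{itemize}
\item the error formula and its form in terms of $\vert q_n\vert$;
\item coprimality, via the unimodular determinant, which is special to numerators equal to $1$;
\item the Cauchy property of arbitrary admissible sequences.
\end{itemize}
So your route is shorter. The derivative formulation is more intrinsic, and $\vert\rho_r'\vert$ is also the quantity governing the measure scaling of inverse branches in the paper's ergodicity argument.

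Your uniqueness step peels off digits using $\nu([g_1,g_2,\dots])=-\deg g_1$. This is the classical counterpart of the paper's Lemma~\ref{lemma valuation of cf}. Its advantage is that it directly identifies an arbitrary admissible expression with the algorithm's output, which is exactly what uniqueness requires. Lemma~\ref{lemma equal cf}, by contrast, only compares the algorithmic expansions of two nearby elements.
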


The resulting theory is in many aspects richer and more rigid than its real counterpart, partly due to the non-Archimedean nature of the absolute value and partly due to the presence of positive characteristic. The foundational arithmetic aspects of this theory (including analogues of Dirichlet's theorem, Liouville's theorem, and Roth's theorem) were developed by Mahler \cite{Ma49}, Osgood \cite{Os75}, de Mathan \cite{dM92}, Lasjaunias \cite{La97, La00}, Lasjaunias–de Mathan \cite{LdM96}, Thakur \cite{Th99}, and Schmidt \cite{Sc00}, among others. The survey \cite{La00} by Lasjaunias offers an excellent overview of this classical theory.

In the non-Archimedean world, a different approach to continued fractions was pioneered by Schneider \cite{Sch70}, who defined a
$p$-adic continued fraction algorithm on $\mathbb{Z}_p$ by the map $x\mapsto p^{v(x)}/x - b
$ where $b$ is determined by reducing $p^{v(x)}/x$ modulo $p$, where $v$ denotes the $p$-adic valuation. This map, now called the Schneider map, was later studied systematically by Hirsh and Washington \cite{HirshWashington}, who proved that it preserves the Haar measure on $p\mathbb{Z}_p$ and is ergodic with respect to it, and who established an analogue of Khinchin's theorem for the resulting continued fraction expansion. See also \cite{alvaradoarevalo2} for generalizations of the analogue of Khinchin's theorem.  
The Schneider continued fractions can be directly extended to $\mathbb{F}_q(\!(t^{-1})\!)$ (see \cite{HN22} for instance) in the following way:


\begin{MainThm}\label{fraccion continua scneider polinomios} Let $z\in \mathbb{F}_q\llbracket t^{-1} \rrbracket$, and let $\pi=t^{-1}$. 
Then, there exist $b_0, b_1, b_2,... \in \mathbb{F}_q^\times$ 
and positive integers $a_1,a_2,...$, such that:
\begin{align}\label{SchneiderContinued}
      z= b_0+\cfrac{\pi^{a_1}}{b_{1}+\cfrac{\pi^{a_{2}}}{b_{2}+\cfrac{\pi^{a_{3}}}{\ddots}}}.
  \end{align}
\end{MainThm}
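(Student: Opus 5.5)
The plan is to construct the digits greedily with a Schneider-type map on $\pi\mathbb{F}_q\llbracket \pi \rrbracket$, and then to prove that the resulting finite truncations converge to $z$ in the $\pi$-adic topology. Write $\mathcal{O}=\mathbb{F}_q\llbracket \pi\rrbracket$ (with $\pi=t^{-1}$) and let $v$ be the $\pi$-adic valuation. Take $b_0$ to be the constant term of $z$. This is in $\mathbb{F}_q^\times$ exactly when $z$ is a unit of $\mathcal{O}$; otherwise one must allow $b_0=0$, which is the only reading under which the statement covers every $z$. Set $x_1=z-b_0\in\pi\mathcal{O}$.

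The recursion runs as follows. Given $x_n\in\pi\mathcal{O}\smallsetminus\{0\}$, put $a_n=v(x_n)\geq 1$, so that $u_n:=\pi^{a_n}/x_n$ is a unit of $\mathcal{O}$. Let $b_n\in\mathbb{F}_q^\times$ be the reduction of $u_n$ modulo $\pi$, and set $x_{n+1}=u_n-b_n\in\pi\mathcal{O}$. Equivalently, $x_{n+1}=T(x_n)$ for the Schneider map $T(x)=\pi^{v(x)}/x-\overline{\pi^{v(x)}/x}$. By construction we get the exact identities
\[
x_n=\frac{\pi^{a_n}}{b_n+x_{n+1}},\qquad z=b_0+\cfrac{\pi^{a_1}}{b_1+\cfrac{\pi^{a_2}}{\ddots+\cfrac{\pi^{a_n}}{b_n+x_{n+1}}}}
\]
for every $n$. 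If some $x_{n+1}=0$, the expansion terminates and is exact. Otherwise we obtain infinite sequences $(a_n)$ and $(b_n)$ with $a_n\ge1$ and $b_n\in\mathbb{F}_q^\times$, as required.

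For convergence I will use the usual matrix formalism. Define
\[
\begin{pmatrix} p_n & p_{n-1}\\ q_n & q_{n-1}\end{pmatrix}=\begin{pmatrix} b_0&1\\1&0\end{pmatrix}\prod_{k=1}^{n}\begin{pmatrix} b_k&1\\ \pi^{a_k}&0\end{pmatrix},
\]
so that $p_n=b_np_{n-1}+\pi^{a_n}p_{n-2}$ and $q_n=b_nq_{n-1}+\pi^{a_n}q_{n-2}$. With this normalization one checks that:
\begin{itemize}
\item $p_n/q_n$ is the truncated fraction;
\item the quantity $z$ equals $\dfrac{p_n+x_{n+1}p_{n-1}}{q_n+x_{n+1}q_{n-1}}$;
\item $p_nq_{n-1}-p_{n-1}q_n=\pm\pi^{a_1+\cdots+a_n}$.
\end{itemize}

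The key point, and the step I expect to need the most care, is that every $q_n$ is a unit of $\mathcal{O}$. I will prove this by induction. Since $a_n\ge1$, we have $q_n\equiv b_nq_{n-1}\pmod{\pi}$, so $q_n\equiv b_1\cdots b_n\not\equiv0$. Then $q_n+x_{n+1}q_{n-1}$ is also a unit, because $x_{n+1}\in\pi\mathcal{O}$. Hence
\[
z-\frac{p_n}{q_n}=\frac{\pm x_{n+1}\,\pi^{a_1+\cdots+a_n}}{q_n\,(q_n+x_{n+1}q_{n-1})},
\]
which has valuation at least $a_1+\cdots+a_{n+1}\ge n+1\to\infty$. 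Thus the truncations converge to $z$, which is the claimed expansion. Uniqueness is not part of the statement, but the same computation shows it: any expansion of the form above forces $b_0\equiv z$ modulo $\pi$, forces $a_1=v(z-b_0)$, and so on inductively.
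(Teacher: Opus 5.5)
Your proof is correct. Both of your adjustments to the statement are right: for $z\in\pi\mathcal{O}$ one must allow $b_0=0$, and the expansion may terminate (by Theorem~\ref{main theo 1}, exactly when $z\in K$).

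The paper does not prove this statement itself; it attributes it to \cite{HN22}. It is, however, the special case $P=t$ of the paper's results. For $d=1$ one has $A_d=F$, so Lemma~\ref{lemma first approx} applied to $1/z_i$ gives $n_i=-\nu(z_i)$. Hence $P^{n_i}=\pi^{\nu(z_i)}$, and $T_t$ is exactly your map $T$. Example~\ref{ex affine cf} supplies $b_0$, and Theorem~\ref{main theo 1} gives $\nu(z-\x_r(z))=a_1+\cdots+a_{r+1}$.

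Where you differ is in how this estimate is obtained. The paper's main argument (Lemma~\ref{lemma log derivate and degrees} and Proposition~\ref{prop approx cf}) proves $\nu(z-\x_r(z))=\nu(\rho_r(z)/\rho_r'(z))$ by induction on the M\"obius maps $\rho_r$. It uses the chain rule and the shift $z\mapsto\sigma_1(z)$, and never has to control denominators. You instead use the Wallis recurrences, the determinant identity and the exact formula $z=(p_n+x_{n+1}p_{n-1})/(q_n+x_{n+1}q_{n-1})$. The paper uses this kind of computation only in two places: the Remark closing \S\ref{sec existence of cf}, to show the convergents are Cauchy, and the proof of Theorem~\ref{main theo 2}, to compute $\nu(a_r)$.

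What your route buys is that it identifies the limit with $z$ and gives an exact error term in one step. In fact you get equality $v(z-p_n/q_n)=a_1+\cdots+a_{n+1}$, not just a lower bound, since $v(x_{n+1})=a_{n+1}$ and the denominators are units. It is this short precisely because for $d=1$ every $q_n$ is automatically a unit. For general $P$ the denominators have valuation $-\sum\deg f_i$, and one needs the case analysis from the proof of Theorem~\ref{main theo 2} ($n_r<0$ versus $n_r=0$ with $\deg f_r>0$). The log-derivative induction is what lets the paper avoid that analysis when establishing the valuation formula for arbitrary $P$.
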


More recently, Arenas-Carmona and Bravo \cite{AB24} studied Diophantine approximation in local function fields via the theory of Bruhat–Tits trees, obtaining Dirichlet-type theorems with sharp constants and computing the measure of sets of elements that can be approximated better than predicted. Their approach uses the geometry of arithmetic quotients of the Bruhat–Tits tree as a replacement for continued fractions in contexts where these are not directly available (See \cite[Theorem 1.1 and Theorem 1.3]{AB24} for details).

The present work introduces and studies a new type of continued fraction expansion, generalizing the one introduced in Theorem \ref{fraccion continua scneider polinomios} to the function field context for arbitrary ground field $F$.
More explicitly, in this work we describe each element of the completion
$\hat{K}=F(\!(t^{-1})\!)$ of the fraction field $K=F(t)$, with respect to the valuation $\nu=-\deg$, as a continued fraction of the form \eqref{SchneiderContinued}, where $\pi$
is replaced by an arbitrary polynomial (in particular, one having
non-positive valuation). 
We refer to the resulting expansion as a
\emph{(generalized) Schneider continued fraction}. 
To be precise, fix a polynomial $P\in F[t]$  of degree $d\ge 1$, and define a (generalized) Schneider continued fraction as a continued fraction of the form:
\begin{equation}
[n_1,f_1, n_2,f_2, \cdots]:= \cfrac{P^{n_1}}{f_{1}+\cfrac{P^{n_{2}}}{f_{2}+{\ddots}}}, 
\end{equation}
where $f_i \in A_d$, for all $i\geq 1$, and $n_i \in \mathbb{Z}_{\leq 0}$ for all $i\geq 2$.
For each truncation of the form $[n_1,f_1, n_2,f_2, \cdots, n_N, f_N]$, we denote by $[n_1,f_1, n_2,f_2, \cdots, n_N, f_N]_{\mathrm{ev}}$ its value in $\hat{K}$.
In this way, denote by $[n_1,f_1, n_2,f_2, \cdots]_{\mathrm{ev}}$ the value in $\hat{K}$ of the limit $\lim_{N\to \infty}[n_1,f_1, n_2,f_2, \cdots,n_N,f_N]$, whenever it exists.

A Schneider continued fraction with integral part $f_0$ is a continued fraction of the form $f_0+[n_1,f_1, n_2,f_2, \cdots]$, where $f_0\in A$ and $n_1 \in \mathbb{Z}_{\leq 0}$.
Such continued fractions are called Schneider continued fraction with integral part in $A=F[t]$.
To construct the aforementioned continued fractions, we fix a
polynomial $P\in F[t]$ of degree $d\ge 1$ and introduce the
associated transformation $T_P\colon \hat{K}\to \hat{K}$, defined by $z \mapsto \frac{P^n}{z}-f,$
where the pair $(n,f)\in \mathbb{Z}\times A_d$ is uniquely determined by
$z$ via the Euclidean algorithm. Here
$A_d=\{Q\in F[t]:\deg Q<d\}$ denotes the set of residue
representatives modulo $P$. When $d=1$ the map $T_P$ looks similar to the classical Schneider setting. For $d\geq 2$, however, the situation is genuinely new: the 
elements $f_i$ are polynomials of degree up to $d-1$, 
the exponents $n_i$ may take arbitrary nonpositive 
integer values, and the resulting approximants 
$x_r(z)=p_r/q_r\in K$ satisfy a precise approximation formula. This is the content of our following theorem:

\begin{Theorem}\label{main theo 1}
For each $z \in \hat{K}^{\times}$, 
there exists a unique sequence 
$\lbrace (n_i,f_i) \rbrace_{i=1}^{N}$ of pairs with $f_i \in A_d$, $n_i \in \mathbb{Z}$ and $N\in \mathbb{Z}_{\geq 0}\cup \lbrace\infty\rbrace $ such that $\deg(f_i)-n_id>0$, for $i>0$ and
$z=[n_1, f_1, \dots]_{\mathrm{ev}}.$
Moreover, $N$ is finite exactly when $z \in K$.
In both cases, if we write 
$\x_r(z)=[n_1,f_1, \dots, n_r, f_r]_{\mathrm{ev}}$, for $r<N$, then
\begin{equation}\label{eq valuation}
\nu\big(z-\x_r(z)\big)=  2\sum_{i=1}^{r} \deg(f_i) + \deg(f_{r+1})-\sum_{i=1}^{r+1} n_i d.
\end{equation}
\end{Theorem}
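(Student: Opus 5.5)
The plan is to construct the digits $(n_i,f_i)$ by iterating $T_P$, to read off uniqueness from a ``leading $P$-adic digit'' argument, and to get \eqref{eq valuation} from the usual matrix/determinant identities for convergents.

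\emph{Step 1 (one step of the algorithm).} Since $P$ has degree $d\ge1$, every $w\in\hat K$ has a unique expansion $w=\sum_{k\le K}c_kP^k$ with $c_k\in A_d$ and $c_K\neq0$. This is division with remainder by $P$ in $A$, extended to $\hat K\cong F(\!(P^{-1})\!)$ by continuity. Apply this to $w=1/z$ and set $n=-K$, $f=c_K$. Then $P^n/z=f+z'$ with $z'=\sum_{k<K}c_kP^{k-K}$, so $\nu(z')\ge d-(d-1)=1>0\ge\nu(f)$. Hence
\[
\nu(z)=-nd-\nu(f+z')=\deg f-nd .
\]
Conversely, suppose $z=P^{n}/(f+z')$ with $f\in A_d\smallsetminus\{0\}$ and $\nu(z')>0$. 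Writing $z'$ in its own $P$-expansion, which involves only negative powers of $P$, shows that $f$ is the leading digit of $P^n/z$ and $n$ is the exponent of that leading digit. So $(n,f)$ is forced. The condition $\deg(f_i)-n_id>0$ on every later pair says precisely that each tail $z_{i}=[n_i,f_i,\dots]_{\mathrm{ev}}$ has positive valuation. (I will check how the paper intends the first pair to be read when $\nu(z)\le0$; possibly $z$ is first replaced by a tail.) Uniqueness of the whole sequence then follows by induction: the first pair is forced, and the tail $T_P(z)$ is determined by $z$.

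\emph{Step 2 (valuation formula).} Define $p_{-1}=1$, $q_{-1}=0$, $p_0=0$, $q_0=1$, and for $r\ge1$
\[
p_r=f_rp_{r-1}+P^{n_r}p_{r-2},\qquad q_r=f_rq_{r-1}+P^{n_r}q_{r-2},
\]
so that $\x_r=p_r/q_r$. Writing $z_{r+1}=T_P^{r}(z)$ for the $(r+1)$-st tail, we have
\[
z=\frac{p_r+z_{r+1}p_{r-1}}{q_r+z_{r+1}q_{r-1}},\qquad
p_{r-1}q_r-p_rq_{r-1}=\pm\prod_{i\le r}P^{n_i}.
\]
Consequently
\[
z-\x_r=\frac{\pm z_{r+1}\prod_{i\le r}P^{n_i}}{q_r\,(q_r+z_{r+1}q_{r-1})}.
\]
By induction, $\nu(q_r)=-\sum_{i\le r}\deg f_i$. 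In the recurrence, the term $f_rq_{r-1}$ dominates $P^{n_r}q_{r-2}$, because comparing the two valuations reduces to $\deg f_r-n_rd>-\deg f_{r-1}$, which holds since the left side is positive. Similarly $\nu(z_{r+1}q_{r-1})>\nu(q_r)$. Using $\nu(z_{r+1})=\deg f_{r+1}-n_{r+1}d$, these combine to give exactly \eqref{eq valuation}. Since each summand $\deg f_i-n_id\ge1$, the right side tends to $\infty$. Thus $\x_r\to z$, which gives existence.

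\emph{Step 3 (finiteness iff $z\in K$).} If $N<\infty$, then $z$ is a finite expression in $A$ and $P^{\pm1}$, so $z\in K$. Conversely, if $z\in K$, then all tails lie in $K$, and I must show the algorithm stops, i.e.\ some tail is $0$. I expect this to be the main obstacle, since the naive degree argument must account for the powers $P^{n}$ with $n\le0$. I would write each tail as $z_i=a_i/b_i$ with $a_i,b_i\in A$ coprime, clear the negative powers of $P$, and look for a quantity that strictly decreases. A first candidate is $\deg b_i$, or $\deg b_i$ taken modulo the $P$-part. The analogy is the classical fact that rationals have finite Schneider expansions, where one uses the $p$-adic digits of a rational being eventually periodic together with a strictly decreasing height. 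If a direct descent fails, I would instead use the formula from Step 2 together with Liouville-type bounds. Writing $z=a/b$, we have
\[
\nu(z-p_r/q_r)\le\deg b+\deg q_r+\nu\!\Big(\prod_{i\le r}P^{-n_i}\Big)
\]
up to the controlled $P$-factors whenever $z\neq\x_r$. This contradicts the growth given by \eqref{eq valuation} once $r$ is large, so $z=\x_r$ for some $r$.
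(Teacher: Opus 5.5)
\textbf{Steps 1 and 2 are sound.} They take a different and more direct route than the paper.
\begin{itemize}
\item For uniqueness, your leading-digit argument replaces the paper's closeness lemma (Lemma \ref{lemma equal cf}).
\item For the valuation formula, the continuant identity $z-\x_r=\pm z_{r+1}\prod_{i\le r}P^{n_i}/\bigl(q_r(q_r+z_{r+1}q_{r-1})\bigr)$, together with $\nu(q_r)=-\sum_{i\le r}\deg f_i$, replaces the paper's computation of $\nu(\rho_r(z)/\rho_r'(z))$. It also gives Theorem \ref{main theo 2} at the same time.
\item You are right that the condition $\deg f_i-n_id>0$ can only be imposed for $i\ge2$ (with $f_1\neq0$), since $\nu(z)\le0$ is allowed.
\end{itemize}

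\textbf{The gap is Step 3, the implication $z\in K\Rightarrow N<\infty$.} You leave this open, and your Liouville fallback does not work. Set $M_r=-\sum_{i=2}^r n_i\ge0$. Clearing denominators honestly, $P^{M_r}q_r\in A$ has degree $M_rd+\sum_{i\le r}\deg f_i$, and $P^{M_r}p_r\in P^{n_1}A$. So for $z=a/b\neq\x_r$ you get
\[
\nu(z-\x_r)\le \deg b+\max(0,-n_1)d+M_rd+\sum_{i\le r}\deg f_i .
\]
The negative powers of $P$ in $q_r$ make the cleared denominator larger. So the $P$-contribution enters with the opposite sign to the one in your display. When you compare with \eqref{eq valuation}, the term $M_rd$ cancels exactly, and all you get is
\[
\sum_{i\le r}\deg f_i+\nu(z_{r+1})\le \deg b+\max(0,n_1)d .
\]
This forces $\deg f_i=0$ eventually, but it is compatible with infinitely many (bounded) negative $n_i$. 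So there is no contradiction.

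\textbf{What is missing is a descent.} Your first candidate, the denominator degree, does work, but only over two steps.
\begin{itemize}
\item For $i\ge2$ write $z_i=a_i/b_i$ in lowest terms and $n_i=-m\le0$. Then $z_{i+1}=(b_i-f_iP^ma_i)/(P^ma_i)$ and $\deg(P^ma_i)=\deg b_i-\deg f_i$.
\item Since $\gcd(a_i,b_i)=1$, the common factor of numerator and denominator is $\gcd(b_i,P^m)$. Hence the reduced denominator satisfies $\deg b_{i+1}=\deg b_i-\deg f_i-\deg\gcd(b_i,P^m)$.
\item So $\deg b_i$ never increases. It stays constant only if $m\ge1$, $\deg f_i=0$ and $\gcd(b_i,P)=1$.
\item In that case $b_{i+1}=P^ma_i$ is divisible by $P$, so at the next step the degree drops. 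The drop is at least $d$ if $n_{i+1}<0$, and at least $\deg f_{i+1}\ge1$ if $n_{i+1}=0$ (Lemma \ref{lemma deg f positivo}).
\item Since $\deg b_i\ge1$ as long as $z_i\in\mathcal{M}_{\neq0}$, some $z_i$ must vanish.
\end{itemize}

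Do not simply copy the paper's Euclidean descent here. Its claim $\deg r_{i+1}<\deg r_i$ treats $P^{m_i}r_{i-1}$ as a polynomial even when $m_i<0$, so it skips exactly this point. The two-step argument above is what makes the step rigorous.
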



At the end of \S \ref{sec existence of cf}, we prove that continued fractions defined by sequences $\lbrace (n_i,f_i) \rbrace_{i=1}^{N}$ with $f_i \in A_d$, $n_i \in \mathbb{Z}$ and $N\in \mathbb{Z}_{\geq 0}\cup \lbrace\infty\rbrace $ such that $\deg(f_i)-n_id>0$ are Cauchy. In particular, they are convergent in $\hat{K}$.

As the classical continued fractions in Theorem \ref{artincontinuedfractions}, Schneider continued fractions satisfy Diophantine approximation properties, as the following result shows.

\begin{Theorem}\label{main theo 2}
In the notation of Theorem \ref{main theo 1}, there exists $p_r, q_r \in A[\frac{1}{P}]$ such that $\x_r(z)=\frac{p_r}{q_r}$ and 
$$ \nu\left(z-\frac{p_r}{q_r} \right)=-2\nu(q_r)+\deg(f_{r+1})- \sum_{i=1}^{r+1} n_i d. $$
In other words:
$$\Bigg\vert z-\frac{p_r}{q_r} \Bigg\vert= \frac{\Big\vert P^{\sum_{i=1}^{r+1} n_i} \Big\vert}{\vert f_{r+1} \vert\vert q_r \vert^2 }.$$
\end{Theorem}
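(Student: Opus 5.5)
We will define $p_r,q_r$ through the usual matrix recursion of continued fractions and then read off the statement from Theorem~\ref{main theo 1}. Set $p_{-1}=1$, $q_{-1}=0$, $p_0=0$, $q_0=1$. For $i\ge 1$ put
$$p_i=f_ip_{i-1}+P^{n_i}p_{i-2},\qquad q_i=f_iq_{i-1}+P^{n_i}q_{i-2}.$$
Since $f_i\in A_d\subset A$ and $n_i\in\mathbb{Z}$, induction gives $p_i,q_i\in A[\frac1P]$.

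The first step is the standard identity
$$\begin{pmatrix}0&P^{n_1}\\1&f_1\end{pmatrix}\cdots\begin{pmatrix}0&P^{n_r}\\1&f_r\end{pmatrix}=\begin{pmatrix}P^{n_r}p_{r-1}&p_r\\P^{n_r}q_{r-1}&q_r\end{pmatrix}.$$
It is proved by induction on $r$. We then interpret each matrix as the Möbius map $w\mapsto P^{n_i}/(f_i+w)$. This gives $\x_r(z)=p_r/q_r$, and more generally
$$z=\frac{p_r+T^{r}z\,P^{n_r}p_{r-1}}{q_r+T^rz\,P^{n_r}q_{r-1}},$$
where $T^rz$ denotes the tail $[n_{r+1},f_{r+1},\dots]_{\mathrm{ev}}$. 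Taking determinants yields
$$p_{r}q_{r-1}-p_{r-1}q_{r}=\pm P^{\sum_{i\le r}n_i}.$$

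The valuation formula then follows most cleanly by comparison with \eqref{eq valuation}, rather than by a direct estimate. It suffices to compute $\nu(q_r)$, and I claim
$$\nu(q_r)=-\sum_{i=1}^r\deg f_i.$$
To prove the claim, induct using $q_i=f_iq_{i-1}+P^{n_i}q_{i-2}$. The first term has valuation $-\deg f_i+\nu(q_{i-1})$. The second has valuation $-n_id+\nu(q_{i-2})$. The hypothesis $\deg f_i-n_id>0$ gives $-n_id>-\deg f_i$. By the induction hypothesis, $\nu(q_{i-2})\ge\nu(q_{i-1})$, because $\nu(q_{i-1})=\nu(q_{i-2})-\deg f_{i-1}$ and $\deg f_{i-1}\ge 0$. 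Hence the second term has strictly larger valuation, and the ultrametric inequality gives equality in the claim. The base cases are $q_0=1$ and $q_1=f_1$, and the latter requires $f_1\neq0$. If $f_i=0$ were allowed, then the condition $\deg f_i-n_id>0$ with $\deg 0=-\infty$ would fail, so $f_i\ne 0$ automatically. Substituting $-2\nu(q_r)=2\sum_{i\le r}\deg f_i$ into \eqref{eq valuation} gives the stated identity. The absolute-value version follows from $|x|=c^{-\nu(x)}$ together with $\nu(P^{m})=-md$ and $\nu(f_{r+1})=-\deg f_{r+1}$.

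Alternatively, one can give a self-contained derivation. Write
$$z-\frac{p_r}{q_r}=\frac{\pm P^{\sum_{i\le r}n_i}\,T^rz}{q_r\,(q_r+T^rz\,P^{n_r}q_{r-1})}.$$
Then use $\nu(T^rz)=\deg f_{r+1}-n_{r+1}d$ together with the dominance of $q_r$ in the denominator, which is the same inequality as above. The main point needing care is the strict inequality $\nu(P^{n_r}q_{r-1}T^rz)>\nu(q_r)$, i.e.\ the non-cancellation in the ultrametric sums. This reduces to $\deg f_i-n_id>0$, and I expect it to be the only real obstacle. Everything else is bookkeeping of the matrix recursion.
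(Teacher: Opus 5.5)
Your main route is correct and is essentially the paper's proof. The paper likewise takes $p_r,q_r$ from a matrix lifting (its product lifts $\rho_r$, yours lifts $\rho_r^{-1}$). It then proves $\nu(q_r)=-\sum_{i\le r}\deg f_i$ by induction on the three-term recurrence, splitting into the cases $n_r<0$ and $n_r=0$ where you use the single inequality $\deg f_i-n_id>0$, and substitutes into \eqref{eq valuation}.

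One slip needs fixing: your matrix identity is false as written. For $r=1$ the lower-left entry of the product is $1$, not $P^{n_1}q_0$. The first column should be $(p_{r-1},q_{r-1})^{T}$ with no factor $P^{n_r}$, so that $z=(p_r+p_{r-1}z_{r+1})/(q_r+q_{r-1}z_{r+1})$, where $z_{r+1}$ is your tail $T^rz$. This correction leaves $\x_r(z)=p_r/q_r$, your determinant identity (which is already the correct one) and the non-cancellation step of your alternative derivation intact.
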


Beyond the arithmetic theory, in \S \ref{section dynamical aspecst} we study the dynamical and ergodic properties of $T_P$ in the particular case the ground field $F$ is the finite field $\mathbb{F}_q$ with $q$ elements. We add this hypothesis in order to have a Haar measure. In fact, we prove that the Haar measure on $\mathcal{M}$ is both invariant and ergodic under $T_P$, following the strategy of Hirsh and Washington \cite{HirshWashington} with careful adaptations to the function field setting. Via the ergodic theorem, this yields precise asymptotic frequency formulae for the exponents $\{n_i\}_{i=1}^{N}$ for Haar-almost every $z \in \hat{K}^{\times}$. 
We say that $f(r)\asymp_{q,d} g(r)$ if there exist constants $C_1$, and $C_2$ depending only on $q$ and $d$ such that \[C_1 f(r)\leq g(r)\leq C_2 f(r).\]
We prove the following result: 

\begin{Theorem}\label{main theo 3}
With respect to the Haar measure on $\hat{K}$, for almost every $z\in \mathcal{O}=\mathbb{F}_q\llbracket t^{-1}\rrbracket$, the asymptotic growth of the term $-\sum_{i=0}^rdn_i$ involved in Equation \eqref{eq valuation} is the following:
\[-\sum_{i=1}^r n_i\asymp_{q,d} r.\]
\end{Theorem}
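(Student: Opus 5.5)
We will deduce Theorem~\ref{main theo 3} from Birkhoff's ergodic theorem, using the invariance and ergodicity of the Haar measure under $T_P$ that is established in \S\ref{section dynamical aspecst}. The first task is to express the digit $n_i$ as a function evaluated along the orbit. By construction of $T_P$ through the Euclidean algorithm, the pair $(n_{i+1},f_{i+1})$ attached to $z$ is the pair $(n,f)$ attached to $T_P^{i}(z)$. So if we set $\phi(w)=-n(w)$, where $n(w)$ is the exponent chosen by $T_P$ at $w$, then
\[
-\sum_{i=1}^{r} n_i=\sum_{j=0}^{r-1}\phi\big(T_P^{j}z\big).
\]
The set $\mathcal{O}\cap K$ of rational points, where the expansion terminates, is countable and hence Haar-null, so this identity holds for every $r$ outside a null set.

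Next we check that $\phi\in L^1(\mu)$, where $\mu$ is the normalized Haar measure on the invariant domain $\mathcal{M}$, and compute or bound $\int\phi\,d\mu$. The condition $\deg(f)-nd>0$ with $f\in A_d$ means that $n$ is determined by the valuation of $w$ relative to $\deg P=d$. Roughly, $-n$ equals $\lceil \nu(w)/d\rceil$, up to an adjustment at the boundary coming from $\deg f\le d-1$. The set $\{\phi=k\}$ is therefore a union of about $d$ valuation shells $\{\nu(w)=m\}$ with $m\approx kd$. Its Haar measure is of order $(q-1)q^{-kd}$ times at most $d$ terms, so
\[
\mu(\phi=k)\asymp_{q,d} q^{-kd}.
\]
Summing gives $\int\phi\,d\mu=\sum_k k\,\mu(\phi=k)<\infty$. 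We also need a lower bound: since $\phi\ge 0$ and $\phi\ge 1$ on a set of measure bounded below in terms of $q,d$, we get $0<c_1(q,d)\le\int\phi\,d\mu\le c_2(q,d)$. Here we must pin down the exact form of $\mathcal{M}$ and the sign convention. If $n$ can be $0$ on part of $\mathcal{M}$, we need to show that $\phi\ge1$ on a set of positive measure, for example $\{\nu(w)\ge d\}$.

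Birkhoff's theorem then gives, for $\mu$-almost every $z$,
\[
\frac1r\Big(-\sum_{i=1}^r n_i\Big)\longrightarrow \int\phi\,d\mu\in[c_1,c_2].
\]
Hence for each such $z$ and all $r$ large enough the ratio lies in $[c_1/2,\,2c_2]$, which is the claimed $\asymp_{q,d}$ relation. If the statement is meant with constants uniform in $r$, the finitely many small $r$ are handled by noting that $-n_i\ge1$ for $i\ge2$, which gives the lower bound, while the upper bound is only asymptotic. So we read $\asymp$ as holding eventually. A final step transfers the result from $\mathcal{M}$ to $\mathcal{O}$. After one application of $T_P$ every $z\in\mathcal{O}$ lands in $\mathcal{M}$, and $T_P$ is locally a Möbius map, so it sends Haar-null sets to Haar-null sets. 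The first digit shifts the sum by a single finite term, which does not affect the asymptotics.

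The main obstacle is the exact measure computation of the level sets $\{\phi=k\}$ inside $\mathcal{M}$. This requires tracking how the constraint $\deg f< d$ together with $\deg f-nd>0$ splits each valuation shell. We would carry it out first for $d=1$, where $\phi=\nu$ essentially, and then for general $d$ by grouping $d$ consecutive shells.
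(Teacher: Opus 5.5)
Your proof is correct. It uses the same ergodic input as the paper (invariance and ergodicity of $\mu$ under $T_P$, then Birkhoff), but applies it to a different observable.

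\emph{Comparison with the paper.} The paper applies Birkhoff to the indicators of the level sets $\{n_1=N\}$ (Proposition~\ref{proposition primer retorno}) to get the almost-sure frequency of each value. It then sums $N$ times these frequencies to obtain an explicit constant. You apply Birkhoff once, to $\phi=-n_1\in L^1(\mu)$. Your version is more rigorous at the one delicate point. Individual frequencies give the lower bound, by truncating to $\min(\phi,M)$. The upper bound, however, needs control of the tail $\phi\,\mathbf{1}_{\{\phi>M\}}$ along the orbit. Frequencies alone do not supply this, and the paper passes over it; integrability of $\phi$ handles it automatically.

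\emph{The constant.} The paper's route yields an explicit constant, but yours can too, and your ``main obstacle'' is not really one. Since $\nu(w)=\deg f-nd$ with $0\le\deg f\le d-1$, one has exactly $-n(w)=\lfloor\nu(w)/d\rfloor$, with no boundary adjustment. Hence $\{\phi\ge k\}=\pi^{kd}\mathcal{O}$ for $k\ge1$, and $\int\phi\,d\mu=\sum_{k\ge1}q^{1-kd}=q/(q^d-1)$ when $\mu(\mathcal{M})=1$. The paper's value $1/(q^d-1)$ corresponds to normalizing $\mu(\mathcal{O})=1$; this does not matter for $\asymp_{q,d}$.

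\emph{Two small corrections.} First, for the transfer to $\mathcal{O}$ what you need is that preimages of null sets are null, not images. On $\mathcal{M}$ this is just invariance. On $\{\nu=0\}$ it holds because the inverse branches $w\mapsto 1/(w+c)$, $c\in\mathbb{F}_q^\times$, map $\mathcal{M}$ isometrically onto $c^{-1}+\mathcal{M}$. Second, your claim that $-n_i\ge1$ for $i\ge2$ is false when $d\ge2$; for example, all $n_i$ vanish on $\mathcal{Z}_0$. In fact $n_1=0$ on a set of positive measure (all units of $\mathcal{O}$, for instance), so no positive lower constant can hold uniformly in $r$. The eventual reading you adopt is therefore the only tenable one, and it is also what the paper actually proves.
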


A key feature of our construction, which has no analogue in the classical Schneider setting, is the set consisting of elements whose Schneider expansion uses only zero exponents. These are exactly the elements that are worst approximable in our framework. We compute the Hausdorff dimension of this set with respect to the $\nu$-adic topology. We prove this dimension is positive. In particular, we prove the following theorem.

\begin{Theorem}\label{main theo 4}
    
The set $\mathcal{Z}_0=\left\{ z\in \mathcal{M} \, | \, n_i=0 \text{ for all } i\geq 1 \right\}$
 has Hausdorff dimension 
 \[\mathrm{dim_{\mathrm{H}}}(\mathcal{Z}_0)=\dfrac{1}{2}\left(1+\dfrac{\log_q\left(q^{d-1}-1 \right)}{d-1} \right).\]
In particular, the set of elements  $z\in \mathcal{M}$ satisfying
\begin{align*}
\nu(z-\x_r(z))&=2\sum_{ i=1}^r\deg (f_i)+\deg(f_{r+1}),\, \text{ or equivalently }  \\
\left|z-p_r/q_r \right|&=\dfrac{1}{|f_{r+1}||q_r|^2},
\end{align*}
for any $r\geq 0$ is uncountable.
\end{Theorem}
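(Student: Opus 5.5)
The plan is to realize $\mathcal{Z}_0$ as the limit set of a conformal, Moran-type iterated function system on $\mathcal{M}$, and then to compute its dimension by matching upper and lower bounds.

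\textbf{Step 1: admissible digits.} From the definition of $T_P$ and Theorem \ref{main theo 1}, an element $z\in\mathcal{Z}_0$ has expansion
\[
z=\cfrac{1}{f_1+\cfrac{1}{f_2+\ddots}},
\]
where each $f_i\in A_d$ satisfies $\deg f_i>0$, so $1\le \deg f_i\le d-1$. I will first read off from the Euclidean-algorithm rule defining $(n,f)$ exactly which $f$ can occur after a step with $n=0$. This includes any compatibility condition between consecutive digits coming from the requirement that the next exponent again be $0$, i.e.\ a valuation condition on $T_P(z)$. This gives, for each $k\in\{1,\dots,d-1\}$, a count $c_k$ of admissible digits of degree $k$, or possibly a transition matrix if admissibility depends on the previous digit.

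The target formula can be rewritten as
\[
\frac{\log_q\big(q^{d-1}(q^{d-1}-1)\big)}{2(d-1)} .
\]
This suggests that the dominant contribution comes from blocks in which each step has contraction ratio $q^{-2(d-1)}$ and $q^{d-1}(q^{d-1}-1)$ branches, or equivalently from a pressure equation whose root has this form. I will use this expression as a consistency check on the digit count.

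\textbf{Step 2: cylinder geometry.} Fix an admissible word $(f_1,\dots,f_r)$ and let $I(f_1,\dots,f_r)$ be its cylinder. By Theorem \ref{main theo 2} with all $n_i=0$, together with the ultrametric inequality, this cylinder is a ball of radius exactly $q^{-2\sum_{i\le r}\deg f_i}$. The next-level subcylinders inside it are disjoint balls of radius $q^{-2\sum_{i\le r}\deg f_i-2\deg f_{r+1}}$. Because the maps $z\mapsto 1/(f+z)$ are isometries up to a constant factor on the relevant balls, there is no bounded-distortion issue in the non-archimedean setting. Hence $\mathcal{Z}_0$ is exactly self-similar, or graph-directed self-similar if Step 1 produces a transition matrix, and the open set condition holds trivially since the balls are disjoint.

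\textbf{Step 3: computing the dimension.} The dimension is the unique $s$ with
\[
\sum_k c_k\,q^{-2ks}=1,
\]
or with spectral radius $1$ for the matrix $\big(q^{-2\deg(f)s}\,\mathbf 1_{\text{admissible}}\big)$ in the graph-directed case. For the upper bound I will cover $\mathcal{Z}_0$ by the cylinders at level $r$ and let $r\to\infty$. For the lower bound I will use the mass distribution principle with the natural self-similar measure. This measure assigns mass $\prod_i q^{-2\deg(f_i)s}$ to each cylinder, and in an ultrametric space every ball meets one cylinder of comparable size, so the measure of a ball of radius $\rho$ is $\ll\rho^{s}$.

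\textbf{Main obstacle and conclusion.} I expect the main obstacle to be Step 1: pinning down precisely which polynomials of degrees $1,\dots,d-1$ keep the next exponent equal to $0$, and solving the resulting equation in closed form to obtain $\tfrac12\big(1+\log_q(q^{d-1}-1)/(d-1)\big)$. Once the dimension is shown to be positive, the set $\mathcal{Z}_0$ is uncountable. By Theorem \ref{main theo 2} with all $n_i=0$, every element of $\mathcal{Z}_0$ satisfies the stated equality $|z-p_r/q_r|=1/(|f_{r+1}||q_r|^2)$, which gives the final assertion.
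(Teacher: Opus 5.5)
\textbf{The step you flag as the main obstacle cannot be completed: for $d\ge 3$ the stated value is not the Hausdorff dimension of $\mathcal{Z}_0$.} Your method itself is sound: exact self-similarity, then cylinder covers and mass distribution. It is in fact the argument the paper is missing.

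Carry out your Step 1. For $z_i\in\mathcal{M}$, Lemma \ref{lemma deg f positivo} gives $n_i=0$ iff $\nu(z_i)\in\{1,\dots,d-1\}$. In that case $f_i$ is just the polynomial part of $1/z_i$, of degree $\nu(z_i)$. So on $\mathcal{Z}_0$ the map $T_P$ is the classical Artin map: $z$ goes to $1/z$ minus its polynomial part. Up to a countable set, $\mathcal{Z}_0$ is the set of $z\in\mathcal{M}$ whose classical continued fraction has every partial quotient of degree in $\{1,\dots,d-1\}$. There is no compatibility condition and no transition matrix. It is a full shift with $c_k=(q-1)q^k$, and each branch $w\mapsto 1/(f+w)$ is an exact similarity of ratio $q^{-2\deg f}$ on $\mathcal{M}$. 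Your Moran equation therefore reads $(q-1)\sum_{k=1}^{d-1}y^k=1$ with $y=q^{1-2s}$. Your Steps 2--3 show that its root $s^*$ equals $\dim_{\mathrm{H}}(\mathcal{Z}_0)$.

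Now compare with the stated value $s_p$, which corresponds to $y_p:=q^{1-2s_p}=(q^{d-1}-1)^{-1/(d-1)}$. Write $1=(q^{d-1}-1)y_p^{d-1}=(q-1)\sum_{j=0}^{d-2}q^j y_p^{d-1}$. Then the inequality $(q-1)\sum_{k=1}^{d-1}y_p^k<1$ is equivalent to $\sum_{j=0}^{d-2}y_p^{-j}<\sum_{j=0}^{d-2}q^j$. This holds for $d\ge 3$, because $y_p^{-1}=(q^{d-1}-1)^{1/(d-1)}<q$. By monotonicity, $s^*<s_p$. Your own upper-bound step (level-$r$ cylinder covers, $\sum|I|^s=q^{-s}\bigl(\sum_f q^{-2s\deg f}\bigr)^r\to 0$) then gives $\mathcal{H}^s(\mathcal{Z}_0)=0$ for every $s>s^*$, in particular near $s_p$.

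A concrete case: for $q=2$, $d=3$, the equation $2\cdot 4^{-s}+4\cdot 16^{-s}=1$ gives $s^*=\log_4(1+\sqrt5)\approx 0.847$. The statement claims $\tfrac12+\tfrac14\log_2 3\approx 0.896$. The two agree only when $d=2$, where $s^*=\tfrac12(1+\log_q(q-1))$. For $d=1$, $\mathcal{Z}_0$ is empty and the formula is undefined.

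The paper's proof only yields an upper bound. It covers each level-$n$ cylinder entirely by balls of the single diameter $\delta_n$ and asserts that such fixed-scale covers are optimal. This overcounts inside the large cylinders with $\sum\deg f_i<(d-1)n$, and the paper gives no lower bound. Your heuristic of ``$q^{d-1}(q^{d-1}-1)$ branches of ratio $q^{-2(d-1)}$'' is an artifact of that overcount.

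So no digit bookkeeping will make your consistency check pass. What your argument proves is $\dim_{\mathrm{H}}(\mathcal{Z}_0)=s^*$, where $(q-1)\sum_{k=1}^{d-1}q^{k(1-2s^*)}=1$. Since $s^*>0$ for $d\ge 2$ (the alphabet has $q^d-q\ge 2$ letters), the uncountability conclusion survives, and so does your derivation of the approximation identity from Theorem \ref{main theo 2}.
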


\subsection*{Overview of the article} The article is organized as follows. \S \ref{sec existence of cf} introduces the generalized Schneider expansion via a recursive algorithm and proves the existence, and the approximation formula in Theorem \ref{main theo 1}). \S \ref{sec diophantine} establishes the Diophantine properties of the convergents, including the uniqueness of continued fractions and the precise approximation rate in terms of the denominators (Theorem \ref{main theo 2}). \S \ref{section dynamical aspecst} studies the dynamical and ergodic aspects of the map $T_P$: we prove invariance and ergodicity of the Haar measure (Theorems \ref{lemma Haar invariante} and \ref{medida ergodica}), and compute the Hausdorff dimension of the set $\mathcal{Z}_0$
of worst-approximable elements (Theorem \ref{main theo 4}).  Moreover we derive arithmetic consequences via the ergodic theorem, describing the asymptotic frequency of the exponents $n_i$ for Haar-almost every $z$. \S \ref{section examples} illustrates the theory with a family of explicit examples.

\section{Recursive definition of continued fractions}\label{sec existence of cf}

Let $F$ be a field and let $A=F[t]$ denote the polynomial ring in the variable $t$ with coefficients in $F$.
We write by $K$ for the fraction field of $A$.
The field $K$ carries a natural discrete valuation $\nu: K \to \mathbb{Z}\cup \lbrace \infty \rbrace$, defined by $\nu(f/g)=\deg(g)-\deg(f)$.
This valuation extends to a complete non-Archimedean field $\hat{K}=F((t^{-1}))$, which is the completion of $K$ with respect to $\nu.$
By abuse of notation, we continue to denote this valuation by $\nu: \hat{K} \to \mathbb{Z}\cup \lbrace \infty \rbrace$ for the induced valuation on $\hat{K}$. The ring of integers of $\hat K $ is $\mathcal{O}=F\llbracket t^{-1} \rrbracket=\{z\in \hat{K}|\,  \nu(z)\geq 0\}$, with unique maximal ideal $\mathcal{M}=t^{-1}F\llbracket t^{-1}\rrbracket=\{z\in \hat K| \nu(z)>0\}$, generated by the uniformizer $\pi=t^{-1}.$
In the sequel, we denote by $\mathcal{M}_{\neq0}$ the set $\mathcal{M} \cap \hat{K}^{\times}$.

Let $P=P(t)$ be a polynomial over $F$, and let $d=\deg(P)$.
We denote by $A_d$ the set of polynomials of degree stricly less than $d$, i.e., $\A=\lbrace Q=Q(t) \in A \vert \deg(Q) < d \rbrace$.

\begin{Lemma}\label{lemma first approx}
For each $z \in \hat{K}^{\times}$, there exists a 
unique pair $(f,n) \in \A\times \mathbb{Z}$ such that 
$\nu(zP^n-f) >0$ and $\nu(z)=nd-\deg(f)$.
Moreover $f\neq 0$.
\end{Lemma}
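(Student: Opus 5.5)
Since $\deg(f)\in\{0,\dots,d-1\}$ whenever $f\neq 0$, the condition $\nu(z)=nd-\deg(f)$ says that $nd-\nu(z)$ must lie in $\{0,\dots,d-1\}$. This pins down $n$. The plan is to read off $n$ by Euclidean division of $\nu(z)$ by $d$, then produce $f$ as the polynomial part of $zP^n$ reduced appropriately, and finally check uniqueness.

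\emph{Existence.} Let $m=\nu(z)\in\mathbb{Z}$. Write $-m = -nd + r$ with $0\le r\le d-1$, i.e.\ take $n=\lceil m/d\rceil$ and $r=nd-m$. Then
\[
\nu(zP^n)=m-nd=-r\le 0,
\]
so $zP^n$ has a nonzero leading term in degree $r<d$. Write $zP^n=\sum_{k\le r}c_k t^k$ in $\hat K=F(\!(t^{-1})\!)$ with $c_r\neq0$, and set $f=\sum_{k=0}^{r}c_kt^k$, its polynomial part. Then $f\in A_d$ and $\deg f=r$, so $f\ne 0$. Moreover
\[
zP^n-f=\sum_{k<0}c_kt^k\in\mathcal{M},
\]
so $\nu(zP^n-f)>0$, and $nd-\deg f=nd-r=m=\nu(z)$, as required.

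\emph{Uniqueness.} Suppose $(f,n)$ satisfies both conditions.
\begin{itemize}
\item First, $f\neq0$. If $f=0$, the first condition would give $\nu(zP^n)>0$, while the second would read $\nu(z)=nd-\deg(0)$, which is meaningless or infinite. Either way this is incompatible with $z\neq0$. (If one prefers to avoid interpreting $\deg 0$: the second condition forces $\nu(zP^n)=-\deg f\le 0$, which contradicts $\nu(zP^n)>0$.)
\item Hence $0\le\deg f\le d-1$, and $nd=\nu(z)+\deg f$ lies in $[\nu(z),\nu(z)+d-1]$. That interval contains exactly one multiple of $d$, so $n=\lceil \nu(z)/d\rceil$ is determined.
\item With $n$ fixed, suppose $f,f'$ both work. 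The ultrametric inequality gives $\nu(f-f')\ge\min\{\nu(zP^n-f'),\nu(zP^n-f)\}>0$. But $f-f'$ is a polynomial, and a nonzero polynomial has valuation $-\deg\le0$. Hence $f=f'$.
\end{itemize}

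The only delicate point is the bookkeeping of the convention for $\deg 0$ in the uniqueness step. Everything else is the division algorithm together with the ultrametric inequality.
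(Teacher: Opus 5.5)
Your proof is correct and is essentially the paper's argument: Euclidean division of $\nu(z)$ by $d$ to fix $n$, the polynomial part of $zP^n$ as $f$, and the ultrametric inequality forcing two admissible polynomials to coincide. Your explicit check that any admissible pair has $f\neq 0$ (hence $0\le \deg f\le d-1$, which pins down $n$) makes precise a step the paper only uses implicitly when it asserts $n_1=n_2$; the cleanest justification is the convention $\deg 0=-\infty$, since your parenthetical alternative tacitly assumes $\deg f\ge 0$.
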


\begin{proof}
Let $z \in \hat{K}^{\times}$.
By Euclidean division, there exist $n\in \mathbb{Z}$ and $r \in \lbrace 1-d, \cdots, 0 \rbrace$ such that $\nu(z)=nd+r$, or equivalently, $\nu(zP^{n})=r$.
Write $zP^{n}= \sum_{i=r}^{\infty} a_i t^{-i}$, with 
$a_i \in F$ and $a_{-r} \neq 0$.
The polynomial $f=\sum_{i=r}^{0} a_i t^{-i}=\sum_{j=0}^{-r}a_{-j} t^{j} $ 
belongs to 
$\A$, satisfies $\nu(zP^{n}-f) >0$, and $\deg(f)=-r$, so that $f\neq 0$ and $\nu(z)=nd-\deg(f)$.
For uniqueness, suppose $n_1,n_2 \in \mathbb{Z}$ and $f_1, f_2 \in \A$ both 
satisfy $\nu(z)=n_id-\deg(f_i)$ and $\nu(zP^{n_i}-f_i)>0$ for $i=1,2$. 
Then $n_1=n_2$ and $\deg(f_1)=\deg(f_2)$, from which
$\nu(f_1-f_2)>0$. Since $f_1-f_2$ is a polynomial, this forces $\deg (f_1-f_2)<0$, hence $f_1=f_2.$
\end{proof}

\begin{Def}\label{def Alg of cf}
Write $\mathcal{M}_{\neq0}=\mathcal{M}\smallsetminus\{0\}$.
Given $z \in \hat{K}^\times $. We recursively define a sequence 
$\mathrm{App}(z)=\lbrace (f_i, n_i, z_i)\rbrace_{i=1}^{N} \subseteq \A \times \mathbb{Z}\times \hat{K}^\times$ by the following algorithm:
\begin{itemize}
    \item[\textbf{Step 1:}] Set $z_1=z$. 
    \item[\textbf{Step 2:}] Given $z_i \neq 0$, apply Lemma \ref{lemma first approx}  to $1/z_i$, to obtain the unique pair $(f_i,n_i)\in A_d\times \mathbb{Z}$ satisfying $\nu \left(\frac{1}{z_i}P^{n_i}-f_i \right)$ and $\nu(1/z_i)=n_i d-\deg f_i$. 
    \item[\textbf{Step 3:}] If 
    $\frac{1}{z_i}P^{n_i}-f_i\in\mathcal{M}_{\neq0}$, 
    set $z_{i+1}=\frac{1}{z_i}P^{n_i}-f_i$ and proceed to the next iteration.
    \item[\textbf{Step 4:}] If 
    $\frac{1}{z_i}P^{n_i}-f_i=0$, set $N=i$, terminate the algorithm, and declare that the continued fraction of $z$ terminates at step $i$.
\end{itemize}
\end{Def}

The following result is an immediate consequence of Definition \ref{def Alg of cf} and Lemma \ref{lemma first approx}.

\begin{Lemma}\label{p24}
The sequence $\mathrm{App}(z)$ satisfies the following properties:
 \begin{itemize}   
    \item[(i)] $f_i\neq 0$, for all $i\geq 1$, 
    \item[(ii)] $z_i \in \mathcal{M}_{\neq 0}$, for all $i\geq 2$,
    \item[(iii)] $\nu(z_i)=\deg(f_i)-n_id$.
    In particular $n_i \in \mathbb{Z}_{\leq 0}$ for all $i\geq 2$,
    \item[(iv)] if the algorithm terminates at some step N, then $z\in K$, and
    \item[(v)] if the algorithm does not terminate, the sequence $\lbrace (f_i, z_i) \rbrace_{i=1}^{\infty} \subseteq \A \times \mathcal{M}_{\neq0}$ is infinite.\qed
\end{itemize} 
\end{Lemma}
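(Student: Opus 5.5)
We verify the five items in order, reading each off Definition \ref{def Alg of cf} together with Lemma \ref{lemma first approx}; the only point needing care is the sign of $n_i$ in (iii).

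For (i), Lemma \ref{lemma first approx} applied to $1/z_i\in\hat K^\times$ already guarantees $f_i\neq 0$. For (ii), note that $z_{i+1}$ is defined in Step 3 only when $\frac{1}{z_i}P^{n_i}-f_i\in\mathcal{M}_{\neq0}$, so every $z_i$ with $i\ge 2$ lies in $\mathcal{M}_{\neq 0}$ by construction.

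For (iii), Lemma \ref{lemma first approx} gives $\nu(1/z_i)=n_id-\deg f_i$, hence $\nu(z_i)=\deg(f_i)-n_id$. For $i\ge2$, item (ii) gives $\nu(z_i)>0$, so $n_id<\deg f_i\le d-1<d$. Since $d\ge1$, this yields $n_i<1$, i.e.\ $n_i\le 0$.

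For (iv), suppose the algorithm stops at step $N$. Then $1/z_N=f_NP^{-n_N}\in K$, so $z_N\in K$. We then go backwards using $z_i=P^{n_i}/(f_i+z_{i+1})$, which follows from the definition of $z_{i+1}$ and has $f_i+z_{i+1}\neq 0$ because $z_i\neq0$. Since $P^{n_i},f_i\in K$, a downward induction shows that each $z_i\in K$, and in particular $z=z_1\in K$.

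For (v), if the algorithm never reaches Step 4, then Step 3 is executed at every stage. This produces $z_{i+1}\in\mathcal{M}_{\neq0}$ and $f_i\in A_d$ for all $i$, so the sequence is infinite. (Strictly, this places $z_1=z$ in $\hat K^\times$ rather than $\mathcal{M}_{\neq0}$; the inclusion is meant for $i\ge2$, as in (ii).)

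The main, though mild, obstacle is the sign argument in (iii): it requires combining $\deg f_i\le d-1$ with the strict positivity of $\nu(z_i)$.
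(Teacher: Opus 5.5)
Your proof is correct and matches the paper, which states this lemma as an immediate consequence of Definition \ref{def Alg of cf} and Lemma \ref{lemma first approx} and gives no further argument. Your item-by-item verification is the intended reading: the bound $n_id<\deg f_i\le d-1$ gives $n_i\le 0$, and the backward recursion $z_i=P^{n_i}/(f_i+z_{i+1})$ handles (iv); your caveat that (v) only places $z_i$ in $\mathcal{M}_{\neq0}$ for $i\ge 2$ is also right.
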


\begin{Remark}
Item $(v)$ in Lemma \ref{p24} holds precisely when $z\in \hat{K}\smallsetminus K$ according to Theorem \ref{main theo 1}, which we prove in \S \ref{sec diophantine}.
\end{Remark}

An infinite continued fraction (with numerators) is a formal expression $\w=[m_1,g_1,\dots]$ of the form:
$$ \w= \cfrac{P^{m_1}}{g_{1}+\cfrac{P^{m_{2}}}{g_{2}+{\ddots}}},$$

Given such an expression, we write 
$\w(r)=[m_1,g_1,\dots,m_r,g_r]$ for its $r$-th truncation, and $\x_r(\w)=\w(r)_{\mathrm{ev}} \in K$ for its evaluated value.

\begin{Def}
The continued fraction of $z$ is the expression
$\w_z=[n_1,f_1,n_2,f_2,\dots]$, where $\mathrm{App}(z)=\lbrace (f_i,n_i, z_i) \rbrace_{i=1}^{N}$. The elements $\x_r(z):=\x_r(\w_z)\in K$
are called the convergents of $z$.
\end{Def}

The algorithm above, together with the truncated continued fractions, gives rise to two auxiliary sequences $\{a_r\}$ and $\{b_r\}$, defined by the recurrences 
\begin{equation}\label{recurrence numerator}
b_1=-P^{n_1}, \quad b_2=-f_2P^{n_1}, \quad b_m=-P^{n_m}b_{m-2}-f_mb_{m-1} \text{ (for }m\geq 3),
\end{equation}
and
\begin{equation}\label{recurrence denominator}
a_1=f_1, \quad a_2= f_1f_2+P^{n_2}, \quad a_m=P^{n_m}a_{m-2}+f_ma_{m-1} \text{ (for }m\geq 3).
\end{equation}
In the preceding notation we have $\x_r=-b_r/a_r$. These sequences will be used in the proof of Theorem \ref{main theo 2} in \S \ref{sec diophantine}, and in the proof of ergodicity of the Haar measure with respect to $T_P$ (see Theorem \ref{medida ergodica} in \S \ref{section dynamical aspecst}). The following result is straightforward.

\begin{Lemma}\label{lema am y bm cruzados}
The following identity holds for all $m\geq 1$:
\[b_{m-1}a_{m}-b_ma_{m-1}=(-1)^{m-1}P^{n_1+\cdots +n_m}\]. \qed
\end{Lemma}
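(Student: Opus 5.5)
The plan is a standard induction on $m$, based on the fact that the numerators and denominators obey the same three-term recurrence. I will introduce the conventions $a_0=1$ and $b_0=0$. With these, the recurrence \eqref{recurrence denominator} already produces $a_1=f_1$ and $a_2=P^{n_2}a_0+f_2a_1=f_1f_2+P^{n_2}$, so the recurrence can be taken to hold for all $m\geq 2$.

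For the numerators, I first fix the sign convention from the requirement $\x_r=-b_r/a_r$. Computing $\x_1=P^{n_1}/f_1$ and $\x_2=P^{n_1}f_2/(f_1f_2+P^{n_2})$ gives $b_1=-P^{n_1}$ and $b_2=-f_2P^{n_1}$. In general $b_m=-B_m$, where $B_m$ is the usual numerator of the continued fraction. The $B_m$ satisfy the same recurrence as the $a_m$, with $B_0=0$ and $B_1=P^{n_1}$. Hence $b_m=P^{n_m}b_{m-2}+f_mb_{m-1}$ for $m\geq 2$, with $b_0=0$. This is what the displayed data $b_1,b_2$ force; the minus signs in \eqref{recurrence numerator} are a typo, since with them one would get $b_2=+f_2P^{n_1}$.

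To justify that the $B_m$ and $a_m$ satisfy this recurrence, I would use the matrix formulation
\[
\begin{pmatrix} B_m & B_{m-1}\\ a_m & a_{m-1}\end{pmatrix}
=\begin{pmatrix} 0 & 1\\ 1 & 0\end{pmatrix}\prod_{i=1}^{m}\begin{pmatrix} f_i & 1\\ P^{n_i} & 0\end{pmatrix}.
\]
Evaluating the truncated continued fraction as a composition of M\"obius maps $w\mapsto P^{n_i}/(f_i+w)$ shows that $\x_m=B_m/a_m$.

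With the recurrences in hand, the induction is as follows. Set $D_m=b_{m-1}a_m-b_ma_{m-1}$. The base case is $D_1=b_0a_1-b_1a_0=P^{n_1}$. For the inductive step, substitute both recurrences into $D_m$. The terms involving $f_m$ cancel, namely $b_{m-1}f_ma_{m-1}-f_mb_{m-1}a_{m-1}=0$. What remains is
\[
D_m=P^{n_m}\left(b_{m-1}a_{m-2}-b_{m-2}a_{m-1}\right)=-P^{n_m}D_{m-1}.
\]
Induction then gives $D_m=(-1)^{m-1}P^{n_1+\cdots+n_m}$. Equivalently, one can take determinants in the matrix product: each factor has determinant $-P^{n_i}$, and the swap matrix contributes a further $-1$.

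As a sanity check I would verify $m=2$ directly:
\[
b_1a_2-b_2a_1=-P^{n_1}(f_1f_2+P^{n_2})+f_2P^{n_1}f_1=-P^{n_1+n_2}.
\]
The main obstacle is not the algebra but the bookkeeping. One has to pin down the initial conditions $a_0,b_0$ and the correct signs in \eqref{recurrence numerator} so that the recurrence is consistent with $\x_r=-b_r/a_r$. Once the uniform recurrence for $m\geq 2$ is established, the cancellation in the inductive step is immediate.
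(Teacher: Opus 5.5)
Your argument is correct and is the standard induction on the common three-term recurrence, with $a_0=1$ and $b_0=0$, which the paper leaves as ``straightforward''. Your determinant computation is an equally valid one-line alternative, and the sign correction to the $b_m$ recurrence is genuinely necessary. The cleanest witness of the typo is at $m=3$ rather than $m=2$, since the paper only imposes that recurrence for $m\geq 3$: the printed version gives $b_3=P^{n_1}(P^{n_3}+f_2f_3)$, so outside characteristic $2$ both $\x_3=-b_3/a_3$ and the identity itself fail there.
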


The remainder of this section is devoted to prove the following proposition, which establishes the existence of Schneider continued fractions approximating elements of $\hat{K}^{\times}$, and establishes the relation \eqref{eq valuation} concerning the rate of convergence of these convergents.
The uniqueness of continued fractions converting to a given element is proved in \S \ref{sec diophantine}. 

\begin{Prop}\label{prop approx cf}
For each $z\in \hat{K}\smallsetminus \lbrace 0 \rbrace$ whose continued fraction does not terminate
before the step $n+1$ is defined, we have the following identity:
\begin{equation}\label{eq approx cf}
\nu\big(z-\x_r(z)\big)=  2\sum_{i=1}^{r}\deg(f_i) +\deg(f_{r+1})-\sum_{i=1}^{r+1} n_i d.
\end{equation}
\end{Prop}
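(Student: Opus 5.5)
The plan is to write $z$ as a Möbius transformation of the tail $z_{r+1}$ whose coefficients are the auxiliary sequences \eqref{recurrence numerator} and \eqref{recurrence denominator}. The difference $z-\x_r(z)$ then becomes an explicit quotient, and Lemma \ref{lema am y bm cruzados} controls its numerator. What remains is to compute the valuation of the denominators $a_r$.

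\emph{Step 1 (tail identity).} Step 3 of Definition \ref{def Alg of cf} gives $z_{i+1}=P^{n_i}/z_i-f_i$, i.e.\ $z_i=P^{n_i}/(f_i+z_{i+1})$ for $i\le r$. Hence $z=[n_1,f_1,\dots,n_r,f_r+z_{r+1}]_{\mathrm{ev}}$. Set $a_0=1$, $a_{-1}=0$, $b_0=0$, $b_{-1}=-1$, so that the recurrences hold from $m=1$. By induction on $r$, exactly as for classical convergents, I will show
\[
z=-\frac{b_r+b_{r-1}z_{r+1}}{a_r+a_{r-1}z_{r+1}},
\]
which is consistent with $\x_r=-b_r/a_r$ (the case $z_{r+1}=0$). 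Subtracting and using Lemma \ref{lema am y bm cruzados}, this gives
\[
z-\x_r(z)=\frac{\pm z_{r+1}\,P^{n_1+\cdots+n_r}}{a_r\,(a_r+a_{r-1}z_{r+1})}.
\]

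\emph{Step 2 (valuation of $a_r$).} This is the key step. I will prove by induction that
\[
\nu(a_m)=-\sum_{i=1}^m\deg f_i ,
\]
so the term $f_m a_{m-1}$ strictly dominates in the recurrence. The valuation gap is
\[
\nu(f_m a_{m-1})-\nu(P^{n_m}a_{m-2})=-\deg f_m-\deg f_{m-1}+n_m d .
\]
This is negative, because $\deg f_m-n_md=\nu(z_m)>0$ for $m\ge2$ by Lemma \ref{p24}(ii),(iii), and $\deg f_{m-1}\ge 0$ since $f_{m-1}\neq0$. The same inequality covers $a_2=f_1f_2+P^{n_2}$ and, with the conventions above, $a_1=f_1$.

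The same comparison, with $z_{r+1}$ in place of $P^{n_{r+1}}/f_{r+1}$, shows that $\nu(a_{r-1}z_{r+1})>\nu(a_r)$. Concretely, $\nu(a_{r-1}z_{r+1})-\nu(a_r)=\deg f_r+\deg f_{r+1}-n_{r+1}d>0$. Hence $\nu(a_r+a_{r-1}z_{r+1})=\nu(a_r)$.

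\emph{Step 3 (assemble).} Take valuations in the expression from Step 1. Use $\nu(z_{r+1})=\deg f_{r+1}-n_{r+1}d$ from Lemma \ref{p24}(iii), $\nu(P^{n_1+\cdots+n_r})=-\sum_{i=1}^r n_i d$, and $-2\nu(a_r)=2\sum_{i=1}^r\deg f_i$. Summing gives exactly \eqref{eq approx cf}. As a sanity check, for $r=1$ the formula follows directly from $z-P^{n_1}/f_1=-P^{n_1}z_2/(f_1(f_1+z_2))$.

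The main point of care is the dominance argument in Step 2. One must check that the strict inequality $\deg f_m-n_md>0$ is available for every index used, including $m=r+1$, where it is supplied by $z_{r+1}\in\mathcal{M}_{\neq0}$. The index $i=1$ needs separate attention, since $n_1$ may be positive there and only $\deg f_1\ge0$ is used.
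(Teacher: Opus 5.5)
Your proof is correct, and it takes a genuinely different route from the paper.

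\textbf{The paper's route.} The paper does not use the convergent recurrences at this stage. It shows $\nu(z-\x_r(z))=\nu\bigl(\rho_r(z)/\rho_r'(z)\bigr)$ by an induction that shifts the expansion by one step: it sets $\tau=\sigma_1(z)$ and $\tilde\rho_r=\rho_{r+1}\circ\sigma_1^{-1}$, and Lemma \ref{lemma valuation of cf} supplies $\nu(\x_{r+1}(z))=\nu(z)$. It then computes the valuation of the quotient $\rho_r(z)/\rho_r'(z)$ by the chain rule, in Lemma \ref{lemma log derivate and degrees}.

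\textbf{Your route.} You write the error in closed form,
$z-\x_r(z)=(-1)^r z_{r+1}P^{n_1+\cdots+n_r}/\bigl(a_r(a_r+a_{r-1}z_{r+1})\bigr)$.
Everything then reduces to the denominator estimate $\nu(a_r)=-\sum_{i\le r}\deg f_i$. The paper only establishes this later, inside the proof of Theorem \ref{main theo 2}, and there it splits into the cases $n_r<0$ and $n_r=0$. Your single inequality $\deg f_m-n_md+\deg f_{m-1}>0$ handles both cases uniformly.

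\textbf{What each buys.} The paper's argument is coordinate-free and reads the formula as a distortion estimate for the M\"obius maps $\sigma_i$. However, it needs the auxiliary shift induction and gives no information on $p_r,q_r$. Yours is more elementary and proves Theorem \ref{main theo 2} at the same time. Indeed, $\nu(z_{r+1})-\sum_{i\le r}n_id-2\nu(a_r)$ is exactly its right-hand side with $q_r=a_r$.

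\textbf{One thing to state explicitly.} The numerator recurrence you need is $b_m=P^{n_m}b_{m-2}+f_mb_{m-1}$, the same recurrence as for $a_m$. The version printed in \eqref{recurrence numerator} has the opposite signs. With it, your initial values $b_{-1}=-1$, $b_0=0$ would give $b_1=P^{n_1}$. Even with the printed $b_1,b_2$, it yields $-b_3/a_3=-\x_3$. Your Step 1 identity, the relation $\x_r=-b_r/a_r$, and Lemma \ref{lema am y bm cruzados} all hold for the sign-corrected recurrence. That is the classical recurrence you clearly intend, so say so.
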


In order to prove Proposition \ref{prop approx cf} we introduce the following notations:

\begin{Def}\label{def rho}
Let $\lbrace (f_i,n_i) \rbrace_{i=1}^n$ be as in Definition \ref{def Alg of cf}. For each $i$ define $\sigma_i$ the M\"obius transformation given by $\sigma_i(x)= \frac{1}{x} P^{n_i}-f_i$, for $x \in \mathbb{P}^1(\hat{K})$, and set $\rho_n= \sigma_n \circ \cdots \circ \sigma_1$.
\end{Def}

The following result is a direct consequence of the definitions.

\begin{Lemma}\label{lemma cf in terms of rho}
Let $z\in \hat{K}\smallsetminus \lbrace 0 \rbrace$ be an element whose continued fraction does not terminate
before step $n+1$, and let $\mathrm{App}(z)=\lbrace (f_i, n_i, z_i)\rbrace_{i=1}^{N}$ be its 
associated sequence. Then, the following identities hold:
\begin{itemize}
\item[(1)] $\rho_n^{-1}(0)=\x_n(z)$, and
\item[(2)] $\rho_n(z)=z_{n+1}$.\qed
\end{itemize}
\end{Lemma}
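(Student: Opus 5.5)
Both identities come from unwinding the composition $\rho_n=\sigma_n\circ\cdots\circ\sigma_1$ one factor at a time, so I would prove each by induction on $n$. First I record that each $\sigma_i$ is a genuine Möbius transformation of $\mathbb{P}^1(\hat{K})$. It is given by the matrix
\[\begin{pmatrix} -f_i & P^{n_i}\\ 1 & 0\end{pmatrix},\]
whose determinant is $-P^{n_i}\neq 0$, since $P\neq 0$ because $d\geq 1$. Hence $\sigma_i$ is a bijection of $\mathbb{P}^1(\hat{K})$. Its inverse is $\sigma_i^{-1}(y)=\frac{P^{n_i}}{f_i+y}$, which one checks by solving $y=\frac{P^{n_i}}{x}-f_i$ for $x$. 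It follows that $\rho_n^{-1}=\sigma_1^{-1}\circ\cdots\circ\sigma_n^{-1}$ is well defined.

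For (2), the induction is immediate from Definition \ref{def Alg of cf}. The hypothesis that the expansion has not terminated before step $n+1$ means that $z_1,\dots,z_n$ are all nonzero. For such $z_i$, Step 3 says precisely that
\[\sigma_i(z_i)=\frac{1}{z_i}P^{n_i}-f_i=z_{i+1}.\]
Starting from $z_1=z$ and applying this for $i=1,\dots,n$ gives $\rho_n(z)=z_{n+1}$.

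For (1), I would prove by induction on $n$ the slightly stronger formal identity
\[\rho_n^{-1}(y)=\cfrac{P^{n_1}}{f_1+\cfrac{P^{n_2}}{\ddots+\cfrac{P^{n_n}}{f_n+y}}}\qquad (y\in\mathbb{P}^1(\hat{K})).\]
The base case $n=1$ is the formula for $\sigma_1^{-1}$. For the inductive step, write $\rho_n^{-1}=\rho_{n-1}^{-1}\circ\sigma_n^{-1}$. This substitutes $\sigma_n^{-1}(y)=\frac{P^{n_n}}{f_n+y}$ into the innermost slot of the length-$(n-1)$ expression, which produces the length-$n$ expression. Setting $y=0$ then gives the truncation $[n_1,f_1,\dots,n_n,f_n]_{\mathrm{ev}}=\x_n(z)$.

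The only delicate point I expect is bookkeeping with $\infty$. An intermediate denominator $f_i+\cdots$ could in principle vanish, and the identity must then be read in $\mathbb{P}^1(\hat{K})$ with the usual Möbius conventions. Since the evaluation $[\cdots]_{\mathrm{ev}}$ of a finite continued fraction is, by definition, computed by exactly these nested Möbius maps, the two sides agree as points of $\mathbb{P}^1$. Moreover, the innermost step is harmless at $y=0$ because $f_n\neq 0$ by Lemma \ref{p24}(i).
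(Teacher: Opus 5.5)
Your proof is correct and is exactly the unwinding of Definitions \ref{def Alg of cf} and \ref{def rho} that the paper intends when it states this lemma without proof as a direct consequence of the definitions: you get (2) by iterating $\sigma_i(z_i)=z_{i+1}$, and (1) by evaluating the nested-fraction formula for $\rho_n^{-1}=\sigma_1^{-1}\circ\cdots\circ\sigma_n^{-1}$ at $0$. Two small precisions: the hypothesis means Step 3 (not Step 4) occurs at each of steps $1,\dots,n$, so $z_{n+1}$ itself is defined and nonzero, which is what you use at $i=n$; and the $\infty$ bookkeeping is unnecessary, since $\deg(f_i)-n_id>0$ for $i\geq 2$ (Lemma \ref{p24}(iii)) forces every intermediate denominator to have valuation $-\deg(f_i)$, as in Lemma \ref{lemma valuation of cf}.
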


\begin{Lemma}\label{lemma log derivate and degrees}
In the notation of Lemma \ref{lemma cf in terms of rho}, we have:
\begin{equation}\label{eq relation nu}
\nu \left(\frac{\rho_r(z)}{\rho_r'(z)}\right)= \sum_{i=1}^{r} (2\deg(f_i) - n_i d) + \deg(f_{r+1})- n_{r+1} d.  
\end{equation}
\end{Lemma}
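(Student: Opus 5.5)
We use the chain rule for the logarithmic derivative of a composition of Möbius maps. Since $\rho_r=\sigma_r\circ\rho_{r-1}$, we have $\rho_r'(z)=\sigma_r'(\rho_{r-1}(z))\,\rho_{r-1}'(z)$. By Lemma \ref{lemma cf in terms of rho}(2), $\rho_{i-1}(z)=z_i$ (with $\rho_0=\mathrm{id}$, so $z_1=z$). Iterating gives
\[
\rho_r'(z)=\prod_{i=1}^{r}\sigma_i'(z_i),
\qquad
\sigma_i'(x)=-\frac{P^{n_i}}{x^2}.
\]
Hence, using $\rho_r(z)=z_{r+1}$,
\[
\frac{\rho_r(z)}{\rho_r'(z)}=(-1)^r\, z_{r+1}\prod_{i=1}^{r}\frac{z_i^2}{P^{n_i}} .
\]

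Taking valuations, and using $\nu(P)=-d$, we get
\[
\nu\!\left(\frac{\rho_r(z)}{\rho_r'(z)}\right)=\nu(z_{r+1})+\sum_{i=1}^{r}\bigl(2\nu(z_i)+n_id\bigr).
\]
Now substitute Lemma \ref{p24}(iii), $\nu(z_i)=\deg(f_i)-n_id$, valid for every $i\le r+1$ because the expansion does not terminate before step $r+1$. Each summand becomes $2\deg(f_i)-2n_id+n_id=2\deg(f_i)-n_id$, and the last term is $\deg(f_{r+1})-n_{r+1}d$. This is exactly \eqref{eq relation nu}.

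The only real point to check is the bookkeeping of the derivative. We must make sure $\sigma_i'$ is evaluated at $z_i=\rho_{i-1}(z)$ and not at $z$. We also must confirm that all $z_i$, for $i\le r+1$, are nonzero and finite, so that the derivatives make sense. This follows from the non-termination hypothesis via Lemma \ref{p24}(ii) and from $z_1=z\neq0$. Everything else is a direct valuation computation.
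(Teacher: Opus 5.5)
Your proof is correct and is essentially the paper's argument. The paper proceeds by induction on $r$, using the chain rule $\rho_{r+1}'(z)=\sigma_{r+1}'(\rho_r(z))\,\rho_r'(z)$ together with $\rho_r(z)=z_{r+1}$ and Lemma \ref{p24}(iii); your closed-form product $\rho_r'(z)=\prod_{i=1}^{r}\sigma_i'(z_i)$ is that induction unrolled, and your bookkeeping (evaluating $\sigma_i'$ at $z_i$ and checking that $z_1,\dots,z_{r+1}$ are nonzero) is exactly what is needed.
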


\begin{proof}
We proceed by induction on $r$.
Firstly, for $r=1$, note that $\rho_1=\sigma_1$, so 
$(\rho_1)'(z)=-\frac{P^{n_1}}{z^2}$, and therefore $\frac{\rho_1(z)}{\rho_1'(z)}=-\frac{z^2}{P^{n_1}}(\frac{1}{z}P^{n_1}-f_1)=-\frac{z_1^{2}z_2}{P^{n_1}}$. Taking valuation yields 
\begin{equation}\label{eq aprox for r=0}
\nu \left(\frac{\rho_1(z)}{\rho_1'(z)}\right) = 2 \deg(f_1)-n_1d +\deg(f_2)-n_2d,
\end{equation}
as required.
Now assume that Equation \eqref{eq relation nu} holds for some $r \in \mathbb{Z}_{\geq 0}$.
Since $\rho_{r+1}=\sigma_{r+1} \circ \rho_r$, we have 
$\nu \left(\frac{\rho_{r+1}(z)}{\rho_{r+1}'(z)}\right)=
\nu \left(\frac{\sigma_{r+1}(\rho_{r}(z))}{\sigma_{r+1}'(\rho_r(z))
\cdot \rho'_r(z)}\right).$
Since $\sigma_{r+1}'(x)=-\frac{1}{x^2}P^{n_{r+1}}$, 
the valuation of $\frac{\rho_{r+1}(z)}{\rho_{r+1}'(z)}$ equals 
$ \nu\left(\frac{\rho_r(z)}{\rho_r'(z)} \right)+ \nu (\rho_r(z))+\nu(\rho_{r+1}(z)) + n_{r+1}d$.
Then, it follows from Lemma \ref{lemma cf in terms of rho} together with Lemma \ref{p24}(iii) that:
\begin{align*}
\nu \left(\frac{\rho_{r+1}(z)}{\rho_{r+1}'(z)}\right)&= \nu\left(\frac{\rho_r(z)}{\rho_r'(z)} \right)+ \nu (z_{r+1})+\nu(z_{r+2}) + n_{r+1}d,\\
&=\nu\left(\frac{\rho_r(z)}{\rho_r'(z)} \right)+ (\deg(f_{r+1})-n_{r+1}d)+(\deg(f_{r+2})-n_{r+2}d) + n_{r+1}d.
\end{align*}
Hence, the result follows from the inductive hypothesis.
\end{proof}

\begin{Lemma}\label{lemma valuation of cf}
Let $\w=[n_1, f_1, \dots]$ be either a finite or an infinite continued fraction
satisfying $\deg(f_i)-n_id>0$ for $i>0$. Assume $z=\w_{\mathrm{ev}}\in\hat K$ is
well-defined, i.e., either $\w$ is finite or 
the sequence $\x_n(\w)$ converges in $\hat{K}$.
Then, we have $\nu(z)= \deg(f_1)-n_1d$.
\end{Lemma}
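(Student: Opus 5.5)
The idea is to show that the tail of the continued fraction has positive valuation, so that it cannot change the leading term of the first denominator $f_1$.

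For the finite case I will argue by induction on the length $N$. Write $\w=[n_1,f_1,\dots,n_N,f_N]$ and let $w=[n_2,f_2,\dots,n_N,f_N]_{\mathrm{ev}}$ be the tail, with $w=0$ when $N=1$. By definition
$$z=\frac{P^{n_1}}{f_1+w}.$$
The tail $w$ is again a continued fraction satisfying the hypothesis, and it is shorter. So the induction hypothesis gives
$$\nu(w)=\deg(f_2)-n_2d>0,$$
or $w=0$. Now $\nu(f_1)=-\deg(f_1)\le 0<\nu(w)$, so the ultrametric inequality is strict and $\nu(f_1+w)=-\deg(f_1)$. In particular $f_1+w\neq 0$; here I use that $f_1\neq0$, which follows from $\deg(f_1)-n_1d>0$ being meaningful with $\deg 0=-\infty$ excluded. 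Hence
$$\nu(z)=\nu(P^{n_1})-\nu(f_1+w)=-n_1d+\deg(f_1),$$
as claimed. The induction is really on the tail, so I will set it up as: for every finite $\w$ satisfying the hypothesis, $\nu(\w_{\mathrm{ev}})=\deg(f_1)-n_1d$.

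For the infinite case, the convergents $\x_n(\w)$ are finite continued fractions with the same first pair $(n_1,f_1)$ that satisfy the hypothesis. By the finite case, $\nu(\x_n(\w))=\deg(f_1)-n_1d$ for every $n$, and this value is independent of $n$. Since $\x_n(\w)\to z$ and the valuation is locally constant away from $0$, it suffices to know that $z\neq0$. If $z\ne 0$, then for large $n$ we have $\nu(z-\x_n(\w))>\nu(z)$, which forces $\nu(\x_n(\w))=\nu(z)$, and we are done.

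The only subtle point, and the step I expect to be the main obstacle, is excluding $z=0$. A sequence whose valuations are all equal to a fixed finite constant cannot converge to $0$, since convergence to $0$ requires $\nu(\x_n)\to\infty$. So $z\neq0$ automatically, and this closes the argument.
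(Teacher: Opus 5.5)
Your proof is correct and follows the paper's own argument. You induct on the length, using that the tail has positive valuation so that $\nu(f_1+w)=\nu(f_1)$, and then pass to the limit for infinite expansions. Your explicit check that the limit is nonzero (all convergents share the finite valuation $\deg(f_1)-n_1d$) spells out what the paper compresses into ``by continuity of the valuation $\nu$''.
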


\begin{proof}
Suppose first that $N \coloneq \text{length}(\w) < \infty$.
We prove the statement by induction on $N$.
For $N=1$, we have $z=\frac{P^{n_1}}{f_1}$, giving 
$$\nu(z)=\nu(P^{n_1})-\nu(f_1)=\deg(f_1)-n_1d.$$
Assume the statement holds for any expression with a
given length $N$.
Let $\w=[n_1, f_1, \dots, n_N, f_N]$ and
$\w'=[n_2, f_2, \cdots, n_N, f_{N}]$, with
$z=\w_{\mathrm{ev}}$ and 
$z'=\w'_{\mathrm{ev}}$. By the inductive hypothesis, $\nu(z')=\deg(f_2)-n_2d$. In particular $\nu(f_1)=\nu(f_1+z')$.
Therefore
$$\nu(z)=\nu\left(\frac{P^{n_1}}{f_1+z'}\right)=\nu(P^{n_1})-\nu(f_1)=\deg(f_1)-n_1d.$$
Finally, if $N=\text{length}(\w)=\infty$, then $z=\lim_{n\to \infty} \x_n(\w)$ and the conclusion follows by continuity of the valuation $\nu$. 
\end{proof}

\begin{proof}[Proof of Proposition \ref{prop approx cf}]
In the notation of Lemma \ref{lemma cf in terms of rho}, it suffices to establish that 
$$\nu\big(z-\x_r(z)\big)=
\nu \left(\frac{\rho_r(z)}{\rho_r'(z)}\right).$$
We proceed by induction on $r \in \mathbb{Z}_{\geq 0}$.
For $r=1$, since $\x_1(z)=\frac{P^{n_1}}{f_1}$,we have $z-\x_1(z)=\frac{z}{f_1}\cdot(f_1-\frac{1}{z}P^{n_1})$. Therefore
$$\nu(z-\x_1(z))=\nu(z)-\nu(f_1)+\nu(z_2)=2\deg(f_1)-n_1d + \deg(f_2) -n_2d.$$
Hence the result follows for $r=1$ from Equation \eqref{eq aprox for r=0}.

Now, assume that $\nu\big(z-\x_r(z)\big)=
\nu \left(\frac{\rho_r(z)}{\rho_r'(z)}\right)$, 
for some fixed $r\geq 0$, and
for all $z \in \hat{K}$ for which $z_{r+1}=\rho_r(z)$ is defined.
Let us write $\tau=\sigma_0(z)$.
The sequence $\mathrm{App}(\tau)$ is exactly 
$\lbrace (f_i,n_i, z_i)\rbrace_{i=2}^{N}$, a shift of 
$\mathrm{App}(z)$.
In particular, the Moebius transformation 
$\tilde{\rho}_r=\rho_{r+1} \circ \sigma_1^{-1}$
is precisely the $r$-th term of the sequence given by 
Definition \ref{def rho}, when $z$ is replaced by $\tau$.
Then, it follows from the inductive hypothesis that 
$\nu\big(\tau-\x_r(\tau)\big)=\nu 
\left(\frac{\Tilde{\rho}_r(\tau)}{\Tilde{\rho}_r'(\tau)}\right)$.
On one hand, it follows from Lemma \ref{lemma cf in terms of rho}(1) that 
$$\x_r(\tau)=\Tilde{\rho_r}^{-1}(0)=\sigma_1 \big(\rho_{r+1}^{-1}(0)\big)=\sigma_1\big(\x_{r+1}(z)\big),$$
so that
$$\nu\big(\tau-\x_r(\tau)\big)=\nu\Big(\sigma_1(z)-\sigma_1\big(\x_{r+1}(z)\big)\Big)=\nu(P^{n_1})+\nu(z-\x_{r+1})-\nu(z)-\nu(\x_{r+1}(z)).$$
Moreover, it follows from Lemma \ref{lemma valuation of cf} together with Lemma \ref{p24}\,(iii) that $\nu(\x_{r+1}(z))=\nu(z)$. Hence, we get:
\begin{equation}\label{eq 1 ind}
\nu \left(\frac{\Tilde{\rho}_r(\tau)}{\Tilde{\rho}_r'(\tau)}\right)=\nu\big(\tau-\x_{r+1}(\tau)\big)= \nu\big(z-\x_{r+1}(z)\big)+\nu(P^{n_1})-2\nu(z).   
\end{equation}
On the other hand, applying the chain rule 
in the denominator, we have 
\begin{equation}\label{eq 2 ind}
\nu \left(\frac{\Tilde{\rho}_r(\tau)}{\Tilde{\rho}_r'(\tau)}\right)=\nu \left(\frac{\rho_{r+1}(z)}{\rho_{r+1}'(z)} \right)+\nu \big( \sigma_1'(z)\big). 
\end{equation}
It is straightforward that 
$\nu \big( \sigma_1'(z)\big)=\nu(P^{n_1})-2\nu(z)$.
Thus, it follows from Equations \eqref{eq 1 ind} and 
\eqref{eq 2 ind} that 
$\nu\big(z-\x_{r+1}(z)\big)=\nu 
\left(\frac{\rho_{r+1}(z)}{\rho_{r+1}'(z)}\right)$, 
whence the result follows.
\end{proof}

\begin{Remark}
Let $\{(n_i,f_i)_{i=1}^m\}$ be a sequence of elements in $A_d\times \mathbb{Z}$ such that $\deg f_i-n_i d>0$, which defines a truncated continued fraction. In order to ensure convergence of the continued fraction, it suffices to verify that the sequence of convergents $\x_m$ is Cauchy. Recall that $\x_m=-b_m/a_m$, where $b_m$ and $a_m$ are defined in 
Equations \eqref{recurrence numerator} and \eqref{recurrence denominator}. By Lemma \ref{lema am y bm cruzados} we have
\[\x_{m+1}-\x_{m}=\dfrac{(-1)^{m}P^{n_1+\cdots+n_m}}{a_ma_{m+1}}\]
Recalling that the valuation of denominator is $-\deg f_{m+1}-2\sum_{i=1}^{m} \deg f_i,$ and $\deg f_i-n_i d>0$, we obtain,
\[\nu(\x_{m+1}-\x_{m})=\sum_{i=1}^m(-d n_i+\deg f_i)+\sum_{i=1}^{m+1} \deg f_i\to \infty \text{ as } m\to \infty.\]
We conclude that, under the hypothesis $\deg f_i-n_i d>0$, the sequence converges. 
\end{Remark}

\section{Diophantine properties}\label{sec diophantine}

In this section we establish the uniqueness of the Schneider continued fraction expansion (See Proposition \ref{prop cf are uniq} below).
Using that, together with Propoposition \ref{prop approx cf}, we prove Theorems \ref{main theo 1} and \ref{main theo 2}.

\begin{Lemma}\label{lemma equal cf}
Let $z,z'\in \hat{K}^\times$ be two elements with $\mathrm{App}(z)=\{(f_i, n_i, z_i)\}_{i=1}^{N}$,
and $\mathrm{App}(z')=\{(g_i,m_i, z'_i)\}_{i=1}^{M}$. Fix $r\geq 0$ and assume that $z_r$ is defined, that is, the continued fraction
of $z$ does not terminate before the $r$-th step in the sense of \textbf{Step 3 \& 4} of
Definition \ref{def Alg of cf}.
Suppose furthermore that
$$\nu(z-z') > 2\sum_{j=1}^{r-1}\deg(f_j) +\deg(f_{r}) - \sum_{j=1}^{r}n_j d.$$
Then $z'_r$ is defined, $n_i=m_i$ for all $i \leq r$ and $f_i=g_i$ for all $i \leq r-1$.
\end{Lemma}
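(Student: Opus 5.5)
The plan is to induct on $r$, peeling off one step of the algorithm with the map $\sigma_1$ and applying the inductive hypothesis to the shifted pair $(z_2,z_2')$. The key fact throughout is Lemma \ref{p24}(iii): $\nu(z_i)=\deg(f_i)-n_id$. Since $0\le \deg f_i\le d-1$, the exponent $n_i$ is determined by $\nu(z_i)$ alone (by Euclidean division of $\nu(1/z_i)$ by $d$, as in Lemma \ref{lemma first approx}). For $r=0$ there is nothing to prove.

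\textbf{Base case $r=1$.} The hypothesis reads $\nu(z-z')>\deg(f_1)-n_1d=\nu(z)$. By the ultrametric inequality, $\nu(z')=\nu(z)$. Hence $m_1=n_1$, and $z_1'=z'$ is defined. No condition on the $f$'s is claimed.

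\textbf{Inductive step.} Assume the statement for $r-1$ and all pairs of elements, and let $r\ge 2$.

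1. Since $r\ge2$, $z_2$ is defined. The bound $B_r:=2\sum_{j<r}\deg f_j+\deg f_r-\sum_{j\le r}n_jd$ satisfies $B_r\ge B_1+\sum_{j=2}^r(\deg f_j-n_jd)>B_1$, because $\deg f_j-n_jd=\nu(z_j)>0$ for $j\ge2$. So the base case gives $m_1=n_1$ and $\nu(z')=\nu(z)$.

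2. Next I would compute
\[
\nu\bigl(P^{n_1}(1/z-1/z')\bigr)=-n_1d+\nu(z-z')-2\nu(z)>B_r-2\deg f_1+n_1d=:B'_{r-1}.
\]
Here $B'_{r-1}=2\sum_{j=2}^{r-1}\deg f_j+\deg f_r-\sum_{j=2}^r n_jd$ is exactly the bound of the lemma for the element $z_2$ at index $r-1$. Moreover $B'_{r-1}\ge \deg f_2-n_2d=\nu(z_2)>0$.

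3. By Lemma \ref{lemma first approx} we have $\nu(P^{n_1}/z-f_1)>0$ and $\nu(P^{n_1}/z'-g_1)>0$, using $m_1=n_1$. Together with step 2 this gives $\nu(f_1-g_1)>0$. Since $f_1-g_1$ is a polynomial, $f_1=g_1$.

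4. Consequently $z_2-z_2'=P^{n_1}(1/z-1/z')$, so $\nu(z_2-z_2')>B'_{r-1}\ge\nu(z_2)$. This forces $z_2'\neq0$: otherwise $\nu(z_2-z_2')=\nu(z_2)$, a contradiction. Hence the algorithm for $z'$ does not terminate at step $1$, $z_2'$ is defined, and $\mathrm{App}(z_2')$ is the shift of $\mathrm{App}(z')$, just as $\mathrm{App}(z_2)$ is the shift of $\mathrm{App}(z)$.

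5. I then apply the inductive hypothesis to $(z_2,z_2')$ with index $r-1$. Its hypothesis holds by step 4, and $z_r=(z_2)_{r-1}$ is defined. This gives that $z'_r$ is defined, that $n_i=m_i$ for $2\le i\le r$, and that $f_i=g_i$ for $2\le i\le r-1$. Combined with steps 1 and 3, this is the claim.

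The main obstacle is the bookkeeping in step 2: checking that the shifted bound is exactly the lemma's bound for $z_2$ at level $r-1$, and that it is strictly positive, which is what forces $f_1=g_1$. I would verify this identity directly from $\nu(z)=\deg f_1-n_1d$; the rest is formal.
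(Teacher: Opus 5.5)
Your proof is correct and follows essentially the same route as the paper. It uses the same computation $\nu\bigl(P^{n_1}(1/z-1/z')\bigr)=n_1d-2\deg(f_1)+\nu(z-z')$, the same appeal to uniqueness in Lemma~\ref{lemma first approx} to get $f_1=g_1$, and the same contradiction with $\nu(z_2)=\deg(f_2)-n_2d$ to show that $z_2'$ is defined. The only difference is packaging: you induct on $r$ by passing to the shifted pair $(z_2,z_2')$, while the paper fixes $r$ and propagates the bound on $\nu(z_j-z_j')$ forward in $j$; your version makes the identification of the shifted bound explicit and is somewhat cleaner.
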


\begin{proof}
We proceed by induction on $j\leq r$.
For $j=1$, we have $\nu(z-z')>\deg(f_1)-n_1d$, whence
$$ \deg(g_1)- m_1d=\nu(z')=\nu\big(z+(z'-z)\big)=\nu(z)=\deg(f_1)-n_1d.$$
Hence $n_1=m_1$. Note that $z_1=z$ and $z_1'=z'$ are always defined.

Now, for $j=2$, we have $n_1=m_1$ arguing as above.
Observe that:
$$\nu\left(P^{n_1}\left(\frac{1}{z}-\frac{1}{z'}\right)\right)= n_1d-2\deg(f_1) +\nu(z-z')>\deg(f_2)-n_2d>0.$$
Therefore
$\nu\left(P^{n_1}\frac{1}{z'}-f_1\right)= \nu\big((P^{n_1}\frac{1}{z}-f_1)+P^{n_1}(\frac{1}{z'}-\frac{1}{z})\big)>0$.
Then, it follows from the uniqueness of polynomials in Lemma \ref{lemma first approx} that $f_1=g_1 \in \A$.
Therefore, we can prove as in the case $j=1$ that $n_2=m_2$.
Observe that $z_2-z_2'=P^{n_1} \left( \frac{z'-z}{zz´} \right) $, whence $\nu(z_2-z_2')>\deg(f_2)-n_2d.$
Moreover, note that $z_2'$ is undefined exactly when $P^{n_1}/z'-f_1=0$.
In that case, $\nu(z_2)>\deg(f_2)-n_2d$, which contradicts Lemma \ref{p24}\,(iii).
Therefore, $z_2'$ is defined.

Now, assume that $z_i'$ is defined and that $n_i=m_i$, $f_{i-1}=g_{i-1}$ for all $i\leq j \leq r$.
We will show, by hard induction, that $z_{j+1}'$ is defined, $n_{j+1}=m_{j+1}$ and that $f_j=g_j$.
Indeed, by direct induction we prove that
$\nu(z_{j+1}-z'_{j+1}) >\sum_{i=j+1}^{r-1} (2\deg(f_i)-n_id) + \deg(f_{r})-n_{r}d$,
arguing as in the previous paragraph.
In particular $\nu(z_{j+1}-z'_{j+1})>\deg(f_{j+1})-n_{j+1}d$, whence $n_{j+1}=m_{j+1}$ by the same argument as given above.
Note that $z_{j+1}'$ is defined, since $\nu(z_{j+1})>\deg(f_{j+1})-n_{j+1}d$ in any other case. This contradicts Lemma  \ref{p24}\,(iii).
Finally, since 
$$\nu\left(P^{n_j}\left(\frac{1}{z_j}-\frac{1}{z'_j}\right)\right)= n_jd-2\deg(f_j) +\nu(z_j-z_j')>\deg(f_{j+1})-n_{j+1}d>0,$$
we have $\nu\left(P^{n_j}\frac{1}{z_j'}-f_j\right)= \nu\big((P^{n_j}\frac{1}{z_j}-f_j)+P^{n_j}(\frac{1}{z'_j}-\frac{1}{z_j})\big)>0$.
Therefore $f_j=g_j \in \A$, by uniqueness in Lemma \ref{lemma first approx}.
\end{proof}

We define the length of a continued fraction $\w$, denoted by $\text{length}(\w)$, to be the largest integer $N$ such that the algorithm does not terminate before step $N$. If no such largest integer exists, we set $\text{length}(\w)=\infty$.

\begin{Prop}\label{prop cf are uniq}
Suppose $z=\w_{\mathrm{ev}}=\w'_{\mathrm{ev}}$, for two continued fraction expressions $\w=[f_1, n_1, \dots]$ and $\w'=[g_1, m_1, \dots]$.
Then $\text{length}(\w)=\text{length}(\w')$, $n_i=m_i$ and $f_i=g_i$, for all $i\geq 0$.
\end{Prop}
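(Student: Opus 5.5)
The plan is to show that any continued fraction expression $\w=[n_1,f_1,\dots]$ with $f_i\in\A$ and $\deg(f_i)-n_id>0$ whose value is $z$ coincides with the expansion produced by the algorithm of Definition \ref{def Alg of cf}. That is, $\w=\w_z$, and the same then holds for $\w'$, which gives the statement. So I will prove that the digits of $\w$ are forced by $z$, arguing by induction on the index and using the shift structure of continued fractions.

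First step. Write $z=P^{n_1}/(f_1+z')$, where $z'=[n_2,f_2,\dots]_{\mathrm{ev}}$ (and $z'=0$ if the length is $1$). Convergence of the tail follows from the Remark at the end of \S\ref{sec existence of cf}, since the tail satisfies the same degree condition.
\begin{itemize}
\item By Lemma \ref{lemma valuation of cf} applied to the tail, $\nu(z')=\deg(f_2)-n_2d>0$, so $z'\in\mathcal{M}$.
\item Then $P^{n_1}/z=f_1+z'$, hence $\nu(P^{n_1}/z-f_1)>0$.
\item Also $\nu(1/z)=\nu(f_1)-\nu(P^{n_1})=n_1d-\deg f_1$, which uses that $f_1\neq 0$ (a consequence of $\deg(f_1)-n_1d>0$, with $\deg 0=-\infty$).
\end{itemize}
By the uniqueness part of Lemma \ref{lemma first approx} applied to $1/z$, the pair $(f_1,n_1)$ is exactly the pair computed in Step 2 of the algorithm. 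Moreover $z_2=P^{n_1}/z-f_1=z'$. Hence the algorithm terminates at step $1$ exactly when $z'=0$, which happens exactly when $\w$ has length $1$. Here one needs that an infinite tail cannot evaluate to $0$, and this again follows from Lemma \ref{lemma valuation of cf}, since the tail has finite valuation $\deg(f_2)-n_2d$.

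Inductive step. The tail $[n_2,f_2,\dots]$ is an expression of the same type with value $z_2$, and $\mathrm{App}(z_2)$ is the shift of $\mathrm{App}(z)$ (as already observed in the proof of Proposition \ref{prop approx cf}). Iterating the first step, by induction on $i$ I get $(n_i,f_i)$ equal to the algorithm's output for $z$ at every step, and the $i$-th tail value equal to $z_i$. The same argument applies to $\w'$, so $n_i=m_i$ and $f_i=g_i$ for all $i$, and both lengths equal $\mathrm{length}(\w_z)$.

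The main obstacle is the length and termination bookkeeping for infinite expressions. Specifically, I must justify that an infinite $\w$ gives a non-terminating algorithm; this holds because each tail value has finite valuation, so it is nonzero. I must also justify that the value of $\w$ really equals $P^{n_1}/(f_1+\text{tail value})$. For this I will use continuity of the Möbius map $x\mapsto P^{n_1}/(f_1+x)$ at the tail value: this is legitimate because $f_1+z'\neq0$, as $\nu(z')>0\ge-\deg f_1=\nu(f_1)$. Passing to the limit in the convergents $\x_r(\w)=P^{n_1}/(f_1+\x_{r-1}(\text{tail}))$ then gives the identity.
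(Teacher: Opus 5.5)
Your proof is correct, but it is organized differently from the paper's.

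The paper compares $\w$ and $\w'$ with each other. It invokes Lemma \ref{lemma equal cf}, which says that two elements with $\nu(z-z')$ large share $n_1,\dots,n_r$ and $f_1,\dots,f_{r-1}$, in the degenerate case $\nu(z-z')=\infty$. It then settles the lengths by a case distinction using the ``$z'_r$ is defined'' clause of that lemma. Finally it recovers the last polynomial $f_N$ separately from $\rho_{N-1}(z)=P^{n_N}/f_N$.

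You instead compare each expression directly with the algorithmic expansion $\w_z$. You peel off one level via $z=P^{n_1}/(f_1+z')$, justified by continuity of the M\"obius map at $z'$ since $f_1+z'\neq 0$. You get $\nu(z')>0$ from Lemma \ref{lemma valuation of cf}, and conclude with the uniqueness in Lemma \ref{lemma first approx} and the shift property of $\mathrm{App}$.

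Your route gains two things:
\begin{itemize}
\item It handles every digit, including the last one, and the length uniformly: an infinite tail has finite valuation, hence is nonzero.
\item It makes explicit a point the paper leaves tacit. Lemma \ref{lemma equal cf} is formulated for the sequences $\mathrm{App}(z)$ and $\mathrm{App}(z')$ produced by the algorithm. Applying it to arbitrary expressions $\w,\w'$ therefore presupposes that their digits and tail values satisfy the algorithm's defining properties, which is exactly what your first step proves. If $\w,\w'$ are already assumed to be algorithm outputs, the paper's application with $z=z'$ is trivial.
\end{itemize}

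The paper's route, in turn, reuses a quantitative local-constancy statement about digits, which is of independent interest, for instance for describing cylinder sets as balls.

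One small remark: the only thing you use from the degree condition at $i=1$ is $f_1\neq 0$. So your argument also covers the intended setting of Theorem \ref{main theo 1}, where $\nu(z)\le 0$ is allowed, the condition $\deg(f_i)-n_id>0$ is imposed only for $i\geq 2$, and $f_1\neq 0$ is assumed separately.
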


\begin{proof}
Without loss of generality, assume $N=\text{length}(\w)\leq M=\text{length}(\w')$.
Moreover, assume that $N<\infty$. 
Then $z_{N+1}$ is undefined, so Lemma \ref{lemma equal cf} implies that $z'_{M+1}$ is likewise undefined giving $M=N$. An analogous argument handles the case $M<\infty$.
Applying Lemma \ref{lemma equal cf}, we obtain $n_i=m_i$ for all $i\leq N$ and $f_i=g_i$, for all $i\leq N-1$.
Since $\text{length}(\w)=\text{length}(\w')=N$, the element $z_N$ can be expressed both as $\rho_{N-1}(z)=P^{n_N}/f_N$ and as $\rho'_{N-1}(z)=P^{n_N}/f_N'$, where $\rho_k'=\sigma_k'\circ \cdots \circ \sigma_1'$ with $\sigma_i'(x)=\frac{1}{x}P^{m_i}-g_i$.
Thus, we conclude from the previous equalities that $f_N=f_N'$ as desired.
Now, if $l(\w)=\infty$, Lemma \ref{lemma equal cf} directly yields $n_i=m_i$ and $f_i=g_i$, for all $i\geq 0$, whence the result follows.
\end{proof}

\begin{proof}[Proof of Theorem \ref{main theo 1}]
The first statement immediately follows from Proposition \ref{prop approx cf} together with Proposition \ref{prop cf are uniq}.
Moreover equation \ref{eq valuation} follows also from Proposition \ref{prop approx cf}.
If the algorithm terminate at step $N$, then $z=\x_r(z) \in K$.
It remains to show that the algorithm terminates in finitely many steps if and only if $z\in K$.
Now, suppose $z=\frac{a}{b}$ with $a,b \in A$ coprime. There exists $m_1 \in \mathbb{Z}$ and $\kappa \in \lbrace 0, \cdots, d-1 \rbrace$ such that $\deg(b)-\deg(a)=-m_1d+\kappa_1$, that is, $\deg(P^{m_1} b)=\deg(a)+\kappa$. 
By the Euclidean algorithm applied to $P^{m_1}b$ and $a$, there exist $g_1 \in A$ with $\deg(g_1)=\kappa$ (so that, $g_1 \in A_d$) and $r_2 \in A$ with $\deg r_2<\deg a$ such that $P^{m_1}b= g_1 a+ r_2$. This gives

$$ z=\frac{a}{b}=\frac{P^{m_1}}{ g_1+ r_2/a}.$$
Applying the same procedure to $r_2/a$ yields $m_2\in \mathbb{Z}$ and $g_2 \in A_d$, and one verifies that $\deg(g_2)-m_2d=\deg(a)-\deg(r_2)>0$.
Iterating, we obtain sequences
$m_i\in \mathbb{Z}$, $g_i\in A_d$ and $r_{i+1} \in A$ satisfying:
\begin{itemize}
\item $\deg(r_{i-1})-\deg(r_i)=-m_id+\kappa_i$, for some $\kappa_i \in \lbrace 0, \cdots, d-1\rbrace$,
\item $P^{m_i} r_{i-1}= g_i r_i + r_{i+1}$, with $\deg(r_{i+1})< \deg(r_i)$,
\item $\deg(g_i)=\kappa_i$.
\end{itemize}
Since
$$\deg(a)>\deg(r_1) >\cdots > \deg(r_i)\geq 0,$$
there exists an index $I\in \mathbb{Z}$ such that $r_I=0$, and therefore
$$x=\cfrac{P^{m_{1}}}{g_1+\cfrac{p^{m_{2}}}{{\ddots + \cfrac{P^{m_N}}{g_I}}}}.$$

As $0<-m_id+\deg(g_i)$ for all $i$, Proposition \ref{prop cf are uniq} identifies this as the unique continued fraction of $z$, which is finite. Hence the algorithm terminates after finitely many steps, as claimed.
\end{proof}

\begin{Example}\label{ex affine cf}
We can define Schneider continued fractions with integral part in $A$ approximating to elements in $\hat{K}^{\times}$ as follows: Given $z \in \hat{K}^{\times}$, there exists a unique polynomial $f_0 \in A= F[t]$ such that $z-f_0 \in \mathcal{M}$ according to \cite[\S 2, Pag. 68]{Paulin}.
Hence, we apply Theorem \ref{main theo 1} to $z'=z-f_0$. In particular, we have $n_1\in \mathbb{Z}_{\leq 0}$ since $\nu(z')\geq 0$.
The rate of convergence of such continued fraction is given by Equation \eqref{eq valuation} since the approximants of $z$ have the form $\x_r(z)= \x_r(z')+f_0$.
This exhibits a direct method to transform the generalized Schneider continued fraction to Schneider continued fractions with integral part in $A$, which are analogous to that introduced in Theorem~\ref{fraccion continua scneider polinomios}.
\end{Example}

\begin{proof}[Proof of Theorem \ref{main theo 2}]
By Theorem \ref{main theo 1} we have $\nu\big(z-\x_r(z)\big)=  2\sum_{i=1}^{r}\deg(f_i) +\deg(f_{r+1})-\sum_{i=1}^{r+1} n_i d.$
Recall from Lemma \ref{lemma cf in terms of rho}\,(1) that $\rho_{r}^{-1}(0)=\x_n(z)$.
With $\rho_r=\sigma_r \circ \cdots \circ \sigma_1$ and $\sigma_i(x)=\frac{P^{n_i}}{x}-f_i$, consider the lifting $M_r$ of $\rho_r$ given by 
$$M_r= \sbmattrix{-f_r}{P^{n_r}}{1}{0} \cdots \sbmattrix{-f_1}{P^{n_1}}{1}{0}=:\sbmattrix{a_r}{b_r}{c_r}{d_r},$$
where $a_r,b_r,c_r,d_r \in A[\frac{1}{P}]$.
In terms of this lifting, $\rho_r(x)=\frac{a_rx+b_r}{c_rx+d_r}$ for $x \in \mathbb{P}^1(\hat{K})$, and therefore $\rho_r^{-1}(x)=\frac{d_rx-b_r}{-c_rx+a_r}$, from which 
$\x_n(z)=\rho_r^{-1}(0)=\frac{-b_r}{a_r}.$
We claim that
\begin{equation}\label{eq valuation of ar}
\nu(a_r)=-\sum_{i=1}^{r}\deg(f_i).
\end{equation}
For $r=1$, we have $a_1=f_1$, and the identity is clear. For $r=2$ , we have $a_2=f_1f_2+P^{n_2}$.
On one hand, if $n_2<0$, then $\nu(a_2)=\nu(f_1f_2)=-\deg(f_2)-\deg(f_1)$.
On the other hand, if $n_2=0$, then $\deg(f_2)>0$ by Lemma \ref{p24}\,(ii)-(iii), so $\nu(a_2)=\nu(f_1f_2)=-\deg(f_2)-\deg(f_1)$, giving the same conclusion.
Now, assume that Equation \eqref{eq valuation of ar} holds for all integers strictly less than $r$.
Note that $a_r$ satisfies the recurrence $a_r=P^{n_r}a_{r-2}+f_r a_{r-1}$ (see Equation \eqref{recurrence denominator}). It also holds $\nu(a_{r-1}) \leq \nu(a_{r-2})$ since $\deg(f_{r-1})\geq 0$. If $n_r<0$, then $\nu(f_ra_{r-1}) < \nu(P^{n_r}a_{r-2})$, since $\deg(f_r)< d=\deg(P)$, whence $\nu(a_r)=\nu(f_ra_{r-1})=-\sum_{i=1}^{r}\deg(f_i)$.
On the other hand, if $n_r=0$, then $\deg(f_r)>0$, according to Lemma \ref{p24}\,(ii)-(iii), whence $\nu(f_ra_{r-1}) < \nu(a_{r-2})$. In particular $\nu(a_r)=\nu(f_ra_{r-1})=-\sum_{i=1}^{r}\deg(f_i)$, as claimed.
Therefore, Equation \eqref{eq valuation of ar} holds. 
Hence, the result follows by taking $p_r=-b_r$ and $q_r=a_r$.
\end{proof}


\section{Dynamical aspects behind the continued fractions}\label{section dynamical aspecst}

Questions concerning the quality of approximation of elements of $\hat{K}$ arise naturally in Diophantine approximation theory. In light of Theorem \ref{main theo 1} the approximation quality is governed by the quantities $-\sum d n_i$ and $\deg f_i$ for all $i$. Henceforth we assume the ground field $F$ the finite field $\mathbb{F}_q$ with $q$ elements. In this section, we will focus on studying $-\sum dn_i$. Using ergodic methods, we establish several properties of this quantity. In particular, we show that its asymptotic behavior is the same for Haar-almost every element of $\hat{K}$, and that there exist infinitely many elements for which $n_i=0$ for every $i$. 
We begin with a lemma that will be used repeatedly throughout this section.

\begin{Lemma}\label{lemma deg f positivo}
    Let $z\in \hat K$. Then $n_1=0$ if and only if $\nu(z)\in \left\{0,...d-1 \right\}$. Furthermore, if $z\in \mathcal{M}$, then $n_1=0$ implies $\deg f >0.$
\end{Lemma}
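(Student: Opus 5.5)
The argument reads off Lemma \ref{p24}\,(iii) for $i=1$ and combines it with the uniqueness of Euclidean division by $d$; only the uniqueness step needs any care. Throughout, $z\in\hat K^{\times}$ (so that $\mathrm{App}(z)$ is defined), $z_1=z$, and $(f_1,n_1)=(f,n)$ is the pair produced in Step 2 of Definition \ref{def Alg of cf}, i.e.\ by applying Lemma \ref{lemma first approx} to $1/z$.

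First I record what Lemma \ref{lemma first approx} gives for $1/z$: $\nu(1/z)=n_1d-\deg(f_1)$ with $f_1\in A_d$ and $f_1\neq 0$. Hence
\[
\nu(z)=\deg(f_1)-n_1d,\qquad 0\le \deg(f_1)\le d-1,
\]
which is Lemma \ref{p24}\,(iii) for $i=1$.

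For the forward implication, if $n_1=0$ the display gives $\nu(z)=\deg(f_1)\in\{0,\dots,d-1\}$.

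For the converse, suppose $\nu(z)\in\{0,\dots,d-1\}$. The integer $\nu(z)$ has a unique expression $\nu(z)=\kappa-nd$ with $\kappa\in\{0,\dots,d-1\}$ and $n\in\mathbb{Z}$; this is Euclidean division of $-\nu(z)$ by $d$. Since $\nu(z)=\nu(z)-0\cdot d$ is one such expression, uniqueness forces $n_1=0$ (and $\deg(f_1)=\nu(z)$). Equivalently, I can invoke the uniqueness clause of Lemma \ref{lemma first approx}: the pair constructed in its proof has exponent $n$ with $\nu(1/z)=nd+r$, $r\in\{1-d,\dots,0\}$, and taking $r=-\nu(z)$ gives $n=0$.

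For the last assertion, let $z\in\mathcal{M}$ with $n_1=0$. Then $\nu(z)>0$, and the display gives $\deg(f_1)=\nu(z)>0$. The statement writes $f$ for $f_1$, and that is all it asks.
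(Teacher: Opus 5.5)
Your proof is correct and follows essentially the same route as the paper: the forward direction and the final claim come from $\nu(z)=\deg(f_1)-n_1d$, which you read off Lemma~\ref{lemma first approx} and the paper obtains from the expansion of $1/z$ when $n_1=0$, and the converse is the uniqueness of Euclidean division, which the paper calls ``directly from the definition.'' Your explicit uniqueness argument and your restriction to $z\neq 0$ only make precise what the paper leaves implicit.
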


\begin{proof}
    If $n_1=0$, then $\nu(1/z-f)>0$, which is equivalent to
    \[\dfrac{1}{z}=f+\sum_{k\geq 1} a_k\left( 1/t\right)^k, \text{ where }a_k\in \mathbb{F}_q.\]
    It follows that $\nu(z)=-\nu(f)=\deg f\in \{0,...,d-1\}.$
Conversely, if $\nu(z)\in \{0,...,d-1\}$, then $n_1=0$ directly from the definition. Finally, if $z\in \mathcal{M}$, then $\nu(z)>0,$ so $\deg f=\nu(z)>0.$
\end{proof}

We now introduce the map whose dynamics encode the continued fraction expansion, called generalized Schneider map.

Before proceeding, we briefly recall the classical Schneider map and contrast it with the map introduced here. Let $\mathscr{O}$ be the valuation ring of a local field with valuation $v$ and uniformizer $\pi$. The classical Schneider map
 $T\colon \pi\mathscr{O}\to \pi\mathscr{O}$ is defined by $x\to \pi^{v(x)}/x-b$, where $b\in \mathscr{O}^\times$ is the unique unit satisfying $\pi^{v(x)}/x\equiv b \text{ mod}\, \pi.$

\begin{Def} We define the generalized Schneider map as the map $T_P\colon \hat{K} \to \hat K$ defined by $z\mapsto\frac{P^n}{z}-f$, where the pair $(n,f)$ is as in Lemma \ref{lemma first approx}.
\end{Def}

 Since $\nu(P^n/z-f)>0$ by construction, the image of $T_P$ is contained in $\mathcal{M}$, so $T_P$ induces a well-defined dynamical system on $\mathcal{M}$.

The essential difference between the two maps is that, in our setting, the role of the uniformizing parameter is played by a polynomial $P$ that belongs outside the valuation ring and may have arbitrary negative valuation. As a consequence, the corresponding element $f$ does not need to be a unit of $\mathscr{O}$.

Repeated application of $T_P$ to any $z\in \hat{K}$ recovers the continued fraction expansion of $z$ via the identity

\begin{align}\label{SchneiderContinued}
      z= \cfrac{P^{n_{1}}}{f_{0}+\cfrac{p^{n_{2}}}{f_{1}+\cfrac{P^{n_{3}}}{\ddots + \cfrac{P^{n_{m-1}}}{f_{m-1}+T_{P}^m}}}}.
  \end{align}

  \noindent In this way, the digits $n_i$ and $f_i$ introduced in \S \ref{sec existence of cf} are recovered inductively as $n_i(z)=n_1\left(T_P^{i-1}(z)\right)$, and $f_i(z)=f_0(T_P^{i-1}(z))$.

Let $\mu$ denote the Haar measure on $\hat{K}$, normalized so that $\mu\left(\mathcal{M}\right)=1$; see \cite{Foland} for background on Haar measure.  In the sequel, we prove that $\mu$ is both invariant and ergodic with respect to $T_P$, following the strategy of Hirsh and Washington \cite[Lemma 1]{HirshWashington}. Although the structure of the arguments is analogous to that of \cite{HirshWashington}, the essential difference between $T_P$ and the classical map requires careful adaptation at several points.

\begin{Theorem}\label{lemma Haar invariante}
    The Haar measure $\mu$ is invariant under $T_P$, that is, $\mu(T_P^{-1}(X))=\mu(X)$ for every measurable set $X.$
\end{Theorem}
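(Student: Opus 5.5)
We verify invariance on balls contained in $\mathcal{M}$ and then extend to all measurable sets. Since $T_P(\hat K)\subseteq\mathcal{M}$ and $\mu$ is normalized by $\mu(\mathcal{M})=1$, we regard $T_P$ as a map $\mathcal{M}_{\neq0}\to\mathcal{M}$; the point $0$ is $\mu$-null and can be ignored. The sets $B=x+\pi^k\mathcal{O}$ with $x\in\mathcal{M}$ and $k\ge1$ form a $\pi$-system generating the Borel $\sigma$-algebra of $\mathcal{M}$. It therefore suffices to prove $\mu(T_P^{-1}(B))=\mu(B)=q^{-k}$ for every such ball; the general case then follows from the $\pi$–$\lambda$ (uniqueness of measures) theorem.

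\textbf{Decomposition of the preimage.} Fix $B$ and decompose $T_P^{-1}(B)\cap\mathcal{M}_{\neq0}$ according to the digit pair $(n,f)=(n_1(z),f_1(z))$. By Lemma \ref{lemma first approx}, $f\neq0$ and $\nu(z)=nd-\deg f$. Hence the admissible pairs are exactly those with $f\in\A\smallsetminus\{0\}$ and $\deg f-nd\ge1$, which forces $n\le0$ (and $n\le -1$ if $\deg f=0$, consistent with Lemma \ref{lemma deg f positivo}).

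For such a pair, consider $\phi_{n,f}(w)=P^n/(f+w)$ for $w\in B$. Since $|w|<1\le|f|$, we have $\nu(f+w)=\nu(f)$. Consequently $z=\phi_{n,f}(w)$ satisfies $\nu(z)=nd-\deg f$ and $\nu(P^n/z-f)=\nu(w)>0$. By the uniqueness in Lemma \ref{lemma first approx}, the digits of $z$ are exactly $(n,f)$ and $T_P(z)=w$. Thus
\[
T_P^{-1}(B)\cap\mathcal{M}_{\neq0}=\bigsqcup_{(n,f)}\phi_{n,f}(B),
\]
a disjoint union, since different pairs give different digits.

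\textbf{Measure of each piece.} For $w,w'\in B$ we have
\[
\phi_{n,f}(w)-\phi_{n,f}(w')=\frac{P^n(w'-w)}{(f+w)(f+w')},
\]
whose absolute value is $|P|^n|f|^{-2}|w-w'|$, the denominators having constant absolute value $|f|^2$. So $\phi_{n,f}$ is a scaled isometry with constant factor $q^{nd-2\deg f}$. Its inverse $z\mapsto P^n/z-f$ is of the same type, so $\phi_{n,f}(B)$ is the full ball of radius $q^{nd-2\deg f}q^{-k}$ around $\phi_{n,f}(x)$. Hence $\mu(\phi_{n,f}(B))=q^{nd-2\deg f}\mu(B)$. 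I expect this surjectivity-onto-a-ball step, which is where the non-archimedean structure is used, to be the only delicate point. It is handled by showing that the inverse map sends that ball back into $B$, using the same valuation estimates.

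\textbf{The sum.} It remains to show $\sum_{(n,f)}q^{nd-2\deg f}=1$. There are $(q-1)q^j$ polynomials $f$ of degree $j$, for $0\le j\le d-1$.
\begin{itemize}
\item For $j\ge1$, summing over $n\le0$ gives $\sum_{m\ge0}q^{-md}=(1-q^{-d})^{-1}$.
\item For $j=0$, summing over $n\le-1$ gives $q^{-d}(1-q^{-d})^{-1}$.
\end{itemize}
Therefore the total is
\[
\frac{q-1}{1-q^{-d}}\Big(q^{-d}+\sum_{j=1}^{d-1}q^{-j}\Big)=\frac{q-1}{1-q^{-d}}\sum_{j=1}^{d}q^{-j}=\frac{q-1}{1-q^{-d}}\cdot\frac{1-q^{-d}}{q-1}=1 .
\]
Summing over the disjoint pieces (countable additivity) gives $\mu(T_P^{-1}(B))=\mu(B)$, which completes the argument.
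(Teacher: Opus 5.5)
Your proof is correct and takes the same route as the paper. You verify invariance on balls, split $T_P^{-1}(B)$ by the admissible digit pairs $(n,f)$ into the pieces $P^n/(f+B)$ of measure $q^{nd-2\deg f}\mu(B)$, and sum the geometric series to $1$; you also supply the $\pi$--$\lambda$ extension, the disjointness, and the surjectivity-onto-a-ball step that the paper leaves implicit. One harmless slip: with the normalization $\mu(\mathcal{M})=1$ one has $\mu(x+\pi^k\mathcal{O})=q^{1-k}$, not $q^{-k}$. Your argument only uses the ratio $\mu(\phi_{n,f}(B))/\mu(B)$, so nothing changes.
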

\begin{proof}
Let $a\in \mathcal{O}$ and let $n$ be a positive integer. It suffices to verify the identity for sets of the form $X=\pi a+\pi^n \mathcal{O}$. Indeed, a direct computation shows that for any $(m,f)\in \mathbb{N}\times A_d\, \cup\, \{0\}\times A_d^{>0}$
\[T_P\left( \dfrac{P^{-m}}{f+\pi a+\pi^n \mathcal{O}}\right)=\pi a +\pi^n \mathcal{O}.\]

\noindent and hence, by Lemma \ref{lemma deg f positivo} 
\[T_P^{-1}\left (\pi a+\pi^n \mathcal{O} \right) = \bigcup_{m=1}^\infty \bigcup_{f\in A_d} \dfrac{P^{-m}}{f+\pi a+\pi^n \mathcal{O}} \cup \bigcup_{f\in A_d^{>0}}\dfrac{1}{f+\pi a +\pi^n \mathcal{O}}.\]

\noindent Since
\[\dfrac{P^{-m}}{f+\pi a+\pi^n \mathcal{O}}=\dfrac{P^{-m}}{f+\pi a}+f^{-2}P^{-m}\pi^n\mathcal{O},\]

\noindent we obtain 

\begin{align*}
    \mu \left(T_P^{-1}\right)
    &=\sum_{f\in A_d}\sum_{m=1}^\infty \mu\left( f^{-2}P^{-m}\pi^n\mathcal{O} \right)+\sum_{f\in A_d^{>0}}\mu \left(f^{-2}\pi^n \mathcal{O} \right),\\
    &=q^{1-n}\left[ \left(\sum_{f\in A_d}q^{-2\deg f} \right)\left(\sum_{m=1}^\infty q^{-md} \right)+\sum_{f\in A_d^{>0}}q^{-2\deg f}\right].
\end{align*}

\noindent Associating terms by degree and counting polynomials of each degree, the previous equality becomes

\[q^{1-n}\left[\left(\sum_{r=0}^{d-1}q^r(q-1)q^{-2r}\right)\left(\sum_{m=1}^\infty \left(1/q^d\right)^m \right)+\sum_{r=1}^{d-1}q^r(q-1)q^{-2r} \right],\]

\noindent which equals $\mu(\pi a+\pi^n \mathcal{O})$. Therefore $\mu$ is invariant under $T_P$.
\end{proof}

\begin{Theorem}\label{medida ergodica}
    The Haar measure $\mu$ is ergodic under $T_P.$
\end{Theorem}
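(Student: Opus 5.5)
The plan is to follow the Hirsh--Washington strategy: show that the cylinder sets of $T_P$ are ``uniformly distorted'' by the inverse branches. This is a bounded distortion (Knopp-type) argument. The cylinder sets are
\[
\Delta(m_1,g_1,\dots,m_r,g_r)=\{z\in\mathcal{M}: n_i(z)=m_i,\ f_i(z)=g_i,\ 1\le i\le r\},
\]
where the digits are admissible, meaning $m_i\le 0$, $g_i\in A_d$, and $\deg g_i-m_id>0$. On such a cylinder the map $T_P^r$ is the restriction of the M\"obius map $\rho_r$ of Definition~\ref{def rho}. It is a bijection from the cylinder onto $\mathcal{M}$: every $w\in\mathcal{M}$ gives an admissible expansion $[m_1,g_1,\dots,m_r,g_r+w]$, and the uniqueness statement (Proposition~\ref{prop cf are uniq}, via the same check as in the proof of Theorem~\ref{lemma Haar invariante}) places this point in the cylinder. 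The inverse is
\[
\psi(w)=\rho_r^{-1}(w)=\frac{d_rw-b_r}{-c_rw+a_r},
\]
with the matrix entries taken from the proof of Theorem~\ref{main theo 2}.

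The key computation is that $\psi$ scales Haar measure by a constant factor on every ball. Its derivative is $\psi'(w)=\det(M_r)^{-1}\cdot(\text{unit})/(a_r-c_rw)^2$, and $\det M_r=\pm P^{n_1+\cdots+n_r}$ by Lemma~\ref{lema am y bm cruzados}. I expect $\nu(c_r)>\nu(a_r)$, by the same induction as for \eqref{eq valuation of ar}. Then for $w\in\mathcal{M}$ we get $\nu(a_r-c_rw)=\nu(a_r)=-\sum\deg g_i$. So $|\psi'|$ is constant on $\mathcal{M}$.

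By the ultrametric property, a M\"obius map without poles on $\mathcal{M}$ and with constant $|\psi'|$ maps each ball $w_0+\pi^k\mathcal{O}$ onto a ball, scaling radius exactly by $|\psi'|$. One can verify this directly as in the displayed identity $\frac{P^{-m}}{f+\pi a+\pi^n\mathcal{O}}=\dfrac{P^{-m}}{f+\pi a}+f^{-2}P^{-m}\pi^n\mathcal{O}$, iterated $r$ times. Hence for every cylinder $\Delta$ and every measurable $E\subseteq\mathcal{M}$,
\[
\mu\big(\Delta\cap T_P^{-r}E\big)=\mu(\Delta)\,\mu(E),
\]
since $\mu(\Delta)=|\psi'|\mu(\mathcal{M})$ and $\mu(\psi(E))=|\psi'|\mu(E)$. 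In fact this is exact independence, not merely bounded distortion.

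Next, the cylinders generate the Borel $\sigma$-algebra of $\mathcal{M}$. By Lemma~\ref{lemma valuation of cf} and Proposition~\ref{prop approx cf}, the diameter of a rank-$r$ cylinder is at most $q^{-\nu(z-\x_r(z))}$. This tends to $0$ because each exponent contributes $\deg g_i-m_id\ge1$, so $\nu(z-\x_r(z))\ge r$. The rank-$r$ cylinders partition $\mathcal{M}$ (rationals form a null set), so every open ball is a countable union of cylinders.

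Now let $E$ be measurable with $T_P^{-1}E=E$ and $\mu(E)>0$. Then $T_P^{-r}E=E$, and the identity above gives $\mu(\Delta\cap E)=\mu(\Delta)\mu(E)$ for all cylinders. Summing over countable disjoint unions extends this to all balls, and hence to all measurable sets. Taking the set $\mathcal{M}\smallsetminus E$ gives $\mu(E)(1-\mu(E))=0$, so $\mu(E)=1$.

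The main obstacle is the step asserting that the inverse branch maps balls onto balls with a uniform scaling factor. This requires verifying that $\nu(c_rw)>\nu(a_r)$ on $\mathcal{M}$, which amounts to tracking the valuation of the lower-left entry $c_r$ along the recurrence. Care is also needed with the cases $m_i=0$, where $\deg g_i>0$ is needed, exactly as in the proof of \eqref{eq valuation of ar}. If the exact scaling fails, I would fall back to a bounded-distortion constant $C$ with $\mu(\Delta\cap T^{-r}E)\ge C^{-1}\mu(\Delta)\mu(E)$, which still yields ergodicity by the standard Knopp lemma argument.
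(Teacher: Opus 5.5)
Your proposal is correct and is essentially the paper's argument: the inverse branch of $T_P^r$ on a cylinder is a M\"obius map with constant Jacobian $q^{d\sum n_i-2\sum\deg f_i}$. This gives $\mu(\Delta\cap T_P^{-r}E)=\mu(\Delta)\mu(E)$, and you finish with a $\pi$--$\lambda$ (cylinders generate the $\sigma$-algebra) argument where the paper instead cites the Hirsh--Washington density lemma.

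There are two harmless slips. First, $\psi'(w)=\det(M_r)/(a_r-c_rw)^2$, not $\det(M_r)^{-1}/(a_r-c_rw)^2$. Second, $\nu(c_r)>\nu(a_r)$ fails when $\deg g_r=0$ (allowed if $m_r<0$). However, $c_r=a_{r-1}$ gives $\nu(c_r)\ge\nu(a_r)$, which together with $\nu(w)\ge 1$ yields the inequality $\nu(c_rw)>\nu(a_r)$ you actually need.
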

\begin{proof}
    Let $X\subset\mathcal{M}$ be a measurable set satisfying $T_P^{-1}(X)=X$. Since the set of elements of $\hat{K}$ with finite continued fraction expansion is precisely $K$, which is countable, we may restrict our attention to elements $x\in X$ with infinite expansion without affecting any measure theoretic conclusion.

    Fix such an $x\in X$, and consider the sequences $a_m$ and $b_m$ before defined. Recall they satisfies 
   \[-b_m/a_m= \cfrac{P^{n_{1}}}{f_{1}+\cfrac{p^{n_{2}}}{f_{2}+\cfrac{P^{n_{3}}}{\ddots + \cfrac{P^{n_{m}}}{f_{m}}}}}.\]

\noindent Define the map
    \[g(z)= \cfrac{P^{n_{1}}}{f_{1}+\cfrac{p^{n_{2}}}{f_{2}+\cfrac{P^{n_{3}}}{\ddots + \cfrac{P^{n_{m}}}{f_{m}+z}}}},\]

    \noindent which satisfies
    \[g(z)=\dfrac{-zb_{m-1}-b_m}{za_{m-1}+a_m}.\]

   \noindent  By Lemma \ref{lema am y bm cruzados}, this yields
    \[g(z)+\dfrac{b_m}{a_m}=\dfrac{(-1)^mzP^{n_1+\cdots+n_m}}{(za_{m-1}+a_m)a_m}.\]

    \noindent We set $N=\sum_{i=1}^m n_i.$ For $z\in \pi \mathcal{M}$, it follows that 
    \[g(z)\equiv -b_m/a_m \text{ mod }\pi^{-dN+1+2\sum \deg f_i}\mathcal{O}.\]

    \noindent Let $\chi$ denote the characteristic function of $X$. Since $T_P^{-1}(X)=X$ and $T_P^{m}\circ g(z)=z$ for all $z\in \mathcal{M}$, we have
    \[\chi(g(z))=\chi(T_P^mg(z))=\chi(z).\]

    \noindent Then, if $y=g(w)$, the condition $y\in x+\pi^{-d(n_0+\cdots +n_m)-1+2\sum deg f_i}\mathcal{O}$ is equivalent to the condition $w\in \mathcal{M}.$ Arguing as in \cite[Lemma 3 p.401]{HirshWashington}, one verifies that  $d\mu(g(w))=q^{d(n_0+\cdots +n_m)-2\sum \deg f_i}d\mu(w),$

    and therefore

    \[\mu\left(X\cap \left(x+\pi^{-d\sum n_i-1+2\sum \deg f_i}\mathcal{O}\right)\right)=q^{d\sum n_i-2\sum \deg f_i}\int_{w\in \mathcal{M}}\chi(w)d\mu(w)=q^{d\sum n_i-2\sum \deg f_i} \mu(X).\]

    \noindent The conclusion that $\mu(X)\in \left\{0,1 \right\}$ now follows by a density argument entirely analogous to \cite[Lemma 2 p. 402]{HirshWashington}.
\end{proof}

Now, we focus on computing the Hausdorff dimension of the set 
\[\mathcal{Z}_0\coloneq \{z\in \mathcal{M} \, | \, n_i(z)=0 \text{ for all } i\geq 1 \},\] 
which, consists precisely of the elements $z$ whose approximant $\x_r(z)=p_r/q_r$ (with $p_r$ and $q_r \in A[1/P]$) satisfies 
\[\nu(z-\x_r(z))=2\sum_{i=1}^r \deg f_i+\deg f_{r+1}.\]
Note that these elements are the worst approximable in our framework.

\begin{Remark}

The condition $n_i(z)=0$ for all $i\geq 1$ can only be satisfied when $\deg P >1$ (see Lemma \ref{lemma deg f positivo}). Indeed, when $d=1$ it follows from Lemma \ref{p24} (iii) that $n_i<0$ for all $i>0$, so $\mathcal{Z}_0=\emptyset$ in that case.
\end{Remark}

\noindent We briefly recall the relevant definitions. For further details, see \cite[Section 2.2]{fa}. For a subset $Q\subset \mathcal{M}$, $s\geq 0$ and $\delta>0$, define 
\begin{align*}
    \mathcal{H}^{s}_{\delta}(Q)\coloneq \inf\left\{\sum_{i}|U_{i}|^{s}:\begin{array}{c}
         \text{ $\{U_{i}\}$ is a countable cover of $Q$ such that for every $i$}  \\
         \text{ $|U_{i}|<\delta$.}
    \end{array} \right\}, 
\end{align*} where $|U_{i}|$ denotes the diameter of $U_{i}$ with respect to the ultrametric induced by the valuation $\nu(\cdot)$ normalized in a way that $|\pi|=q^{-1}$. Setting $\mathcal{H}^{s}(Q)=\lim_{\delta\rightarrow 0}\mathcal{H}^{s}_{\delta}(Q)$, the Hausdorff dimension of $Q$ is
\begin{align}\label{HAUSDORFF}
   \mathrm{dim}_{\mathrm{H}}(Q)\coloneq \inf\{s:\mathcal{H}^{s}(Q)=0\}.
\end{align}

\begin{Def}
Given $n\in \mathbb{Z}$, and $f_0,f_1,...,f_r\in A_d$, the cylinder set associated to the data $(n;f_0,...,f_r)$ is
\[I_{(n;f_0,...,f_r)}\coloneq \{z\in \hat K\, | \, n_i(z)=n, \text{ and }f_i(z)=f_i \text{ for all } i\leq r\}.\]
\end{Def}

\noindent For each integer $r\geq 0$, the collection $\mathcal{U}_r=\left\{I_{(0;f_1,...,f_r)} |  f_0,...,f_r \in A_d^{>0} \right\} $ forms a cover of $\mathcal{Z}_0$. Here, $A_d^{>0}$ denotes the set of elements in $A_d$ of positive degree. The ultrametric property implies that cylinder sets are balls. The next result is straightforward

\begin{Lemma}
    For any $x\in I_{(0;f_0,...,f_r)}$, we have
    \[I_{(0;f_0,...,f_r)}=x+\pi^{1+2\sum_{j=0}^r\deg f_j}\mathcal{O}.\]
    In particular, the diameter of this cylinder is $q^{-1-2\sum_{j=0}^r \deg f_j}$. \qed 
\end{Lemma}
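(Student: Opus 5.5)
The plan is to realize the cylinder as the image of $\mathcal{M}$ under the Möbius map that prepends the digits, exactly as in the proof of Theorem \ref{medida ergodica}. Then we check that this map multiplies distances by the constant factor $q^{-2\sum_{j}\deg f_j}$. Concretely, for $w\in\hat K$ put
\[
g(w)=\cfrac{1}{f_0+\cfrac{1}{f_1+\cfrac{1}{\ddots+\cfrac{1}{f_r+w}}}},
\]
that is, the finite continued fraction with all exponents equal to $0$ and tail $w$. In matrix form (the recurrences \eqref{recurrence numerator}, \eqref{recurrence denominator} with indices shifted, all $n_i=0$) we have $g(w)=\frac{-wb_{r}-b_{r+1}}{wa_{r}+a_{r+1}}$, where $a_{r+1}$ is the denominator built from $f_0,\dots,f_r$ and $a_r$ the one built from $f_0,\dots,f_{r-1}$.

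\textbf{Step 1: the cylinder is $g(\mathcal{M})$.} If $z\in I_{(0;f_0,\dots,f_r)}$, then by the identity \eqref{SchneiderContinued} we have $z=g(T_P^{r+1}z)$ with $T_P^{r+1}z\in\mathcal{M}$. Conversely, take $w\in\mathcal{M}$ and run the algorithm of Definition \ref{def Alg of cf} on $g(w)$ from the inside out. At each stage the current element is $u=1/(f_j+w_j)$ with $w_j\in\mathcal{M}$. Since $\deg f_j\ge 1$, we get $\nu(u)=\deg f_j\in\{1,\dots,d-1\}$, and $\nu(1/u-f_j)=\nu(w_j)>0$. So the uniqueness in Lemma \ref{lemma first approx} (together with Lemma \ref{lemma deg f positivo}) forces the pair at that step to be $(0,f_j)$, and the next iterate is $w_j$. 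By induction on the position this gives $g(w)\in I_{(0;f_0,\dots,f_r)}$.

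\textbf{Step 2: $g$ is a similarity on $\mathcal{M}$.} By Lemma \ref{lema am y bm cruzados} with all $n_i=0$,
\[
g(w)-g(w')=\pm\frac{w-w'}{(wa_r+a_{r+1})(w'a_r+a_{r+1})}.
\]
By the claim \eqref{eq valuation of ar} in the proof of Theorem \ref{main theo 2}, $\nu(a_{r+1})=-\sum_{j=0}^r\deg f_j$ and $\nu(a_r)=-\sum_{j=0}^{r-1}\deg f_j$. For $w\in\mathcal{M}$ this gives $\nu(wa_r)>\nu(a_r)\ge\nu(a_{r+1})$, so $\nu(wa_r+a_{r+1})=\nu(a_{r+1})$. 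Hence
\[
\nu(g(w)-g(w'))=\nu(w-w')+2\sum_{j=0}^r\deg f_j \quad\text{for all } w,w'\in\mathcal{M}.
\]

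\textbf{Step 3: conclusion.} Fix $x=g(w_0)$ in the cylinder. Step 2 immediately gives $g(\mathcal{M})\subseteq x+\pi^{1+2\sum\deg f_j}\mathcal{O}$. For the reverse inclusion, which I expect to be the only delicate point, take $y$ in the ball and set $w=g^{-1}(y)$, using that $g$ is a bijection of $\mathbb{P}^1(\hat K)$. We must show $w\in\mathcal{M}$, and the worry is that $w$ could fall outside $\mathcal{M}$, where the valuation estimate of Step 2 was not established. To handle this, I would argue one digit at a time rather than globally. The single-step map $w\mapsto 1/(f+w)$ is a similarity with ratio $q^{-2\deg f}$ between $\mathcal{M}$ and the ball $\frac1f+\pi^{1+2\deg f}\mathcal{O}$. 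It is onto that ball, because its inverse $u\mapsto 1/u-f$ sends the ball into $\mathcal{M}$ by the same valuation computation. Composing these $r+1$ one-step bijections yields $g(\mathcal{M})=x+\pi^{1+2\sum_{j=0}^r\deg f_j}\mathcal{O}$. The diameter statement is then immediate from the normalization $|\pi|=q^{-1}$.
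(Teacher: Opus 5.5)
Your proof is correct. The paper gives no argument for this lemma (it calls it straightforward), and your route is the natural one implicit in the paper: you identify the cylinder with $g(\mathcal{M})$ for the digit-prepending M\"obius map, then use Lemma \ref{lema am y bm cruzados} and \eqref{eq valuation of ar} to show that $g$ scales distances on $\mathcal{M}$ by exactly $q^{-2\sum_j \deg f_j}$. This is the machinery of the proof of Theorem \ref{medida ergodica}, where the equivalence ``$y$ lies in the ball iff $w\in\mathcal{M}$'' is asserted without proof. Your digit-by-digit surjectivity argument supplies that missing detail: each $w\mapsto 1/(f_j+w)$ is a bijective similarity of $\mathcal{M}$ onto $\frac{1}{f_j}+\pi^{1+2\deg f_j}\mathcal{O}$, so it maps balls onto balls.
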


We are now in a position to prove Theorem \ref{main theo 4}.

\begin{proof}[Proof of Theorem \ref{main theo 4}]

We compute $\mathcal{H}_\delta^s(\mathcal{Z}_0)$ explicitly by taking the covering at which the infimum is attained. The ultrametric property guarantees that the covering by balls of a fixed diameter is optimal for this purpose. We work with the sequence of $\delta=\delta_n=q^{-(1+2(d-1)n)}$, which tends to $0$ as $n\to \infty$, and compute $\mathcal{H}_{\delta_n}^s(\mathcal{Z}_0)$ by refining each cylinder in $\mathcal{U}_n$ into balls of diameter $\delta_n.$ Indeed, a direct computation gives:

\begin{align*}
\mathcal{H}^s_{\delta_n}(\mathcal{Z}_0)&=\sum_{I_{(0;f_1,...,f_n)}}\sum_{\substack{U_i\subset I_{(0,f_1,...,f_n)}\\|U_i|=q^{-(1+2(d-1)n)}}} |U_i|^s,  \\
&=\sum_{I_{(0;f_1,...,f_n)}} q^{(-2(d-1)n-1)s}\cdot q^{2\sum \deg f_{i}-2(d-1)n},\\
&=q^{-s(2(d-1))n-1}q^{2(d-1)n}\sum_{I_(0;f_1,...,f_n)}q^{-2\sum_{i\leq n}f_i},\\
&= q^{-s(2(d-1)n-1)}q^{2(d-1)n} \left(\sum_{f\in A_d^{>0}}q^{-2\deg f} \right)^n.
\end{align*}

\noindent Summing over polynomials of each degree, we obtain that

\[\mathcal{H}_{\delta_n}^s(\mathcal{Z}_0)=q^{-s(2(d-1)n-1)}q^{2(d-1)n}\left(\sum_{m=1}^{d-1}q^m(q-1)q^{-2m} \right)^n.\]

\noindent This last expression equals

\[\mathcal{H}_{\delta_n}^s=q^{-s(2(d-1)n-1)}q^{2(d-1)n}\dfrac{(q^{d-1}-1)^n}{q^{n(d-1)}}.\]

\noindent Then we have that the Hausdorff dimension (by definition) of $\mathcal{Z}_0$ is the value of $s$ such that
\[q^{-2s(d-1)}\cdot q^{2(d-1)}\cdot \dfrac{q^{d-1}-1}{q^{d-1}}=1.\]

\noindent Taking logarithm base $q$ and dividing by $d-1$ yields 
\[s=\dfrac{1}{2}\left(1+\dfrac{\log_q\left(q^{d-1}-1 \right)}{d-1} \right).\]

\noindent Since $q^{d-1}\geq 2$ for $d\geq 2$, this dimension is positive, and in particular $\mathcal{Z}_0
$ is uncountable.
\end{proof}

The remainder part of this section is devoted to study the asymptotic behavior of $\sum_{i=1}^rn_i.$ In particular, we will prove Theorem \ref{main theo 3}. To do so, we use the following result, which connects the ergodic properties of $T_P$ with the Diophantine approximation properties of Schneider convergents.
Let $N$ be a negative integer. A direct application of Birkhoff theorem (see \cite[Theorem 3.2.1]{vianakrerley}) gives the following proposition.

\begin{Prop}\label{proposition primer retorno}
For Haar-almost every point $z\in \hat K$, the following asymptotic frequencies hold.
$$\lim_{k\to \infty}\dfrac{\#\{i\leq k:n_i(x)=N\}}{k}= \begin{cases}  \mu(\pi\mathcal{O}\setminus \pi^{d-1}\mathcal{O})=1-q^{-d}&\text{ if } N=0\\
\mu(\pi^{dN}\mathcal{O}\setminus \pi^{d(N+1)-1}\mathcal{O})=q^{-dN}-q^{d(N+1)} & \text{ if } N<0.\\
\end{cases}
$$
\end{Prop}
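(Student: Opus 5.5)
The plan is to reduce the statement to Birkhoff's ergodic theorem for the system $(\mathcal{M},\mu,T_P)$, using Theorems \ref{lemma Haar invariante} and \ref{medida ergodica}. Fix $N\le 0$ and set $E_N=\{z\in\mathcal{M}: n_1(z)=N\}$. We have $n_i(z)=n_1(T_P^{i-1}(z))$, and the indicator $\chi_{E_N}$ is in $L^1(\mu)$ because $\mu(\mathcal{M})=1$. Hence
\[
\frac{\#\{i\le k: n_i(z)=N\}}{k}=\frac1k\sum_{i=0}^{k-1}\chi_{E_N}\big(T_P^{i}(z)\big).
\]
By invariance and ergodicity, Birkhoff's theorem shows that for $\mu$-a.e.\ $z\in\mathcal{M}$ this average converges to $\int\chi_{E_N}\,d\mu=\mu(E_N)$.

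The second step is to identify $E_N$ as a union of valuation shells. By Lemma \ref{lemma first approx}, applied to $1/z$ as in Definition \ref{def Alg of cf}, we have $\nu(z)=\deg(f_1)-n_1d$ with $0\le \deg f_1\le d-1$. Therefore $n_1(z)=N$ if and only if $\nu(z)\in\{-Nd,\dots,-Nd+d-1\}$, which is consistent with Lemma \ref{lemma deg f positivo} when $N=0$. Intersecting with $\mathcal{M}$ gives two cases.
\begin{itemize}
\item For $N=0$: $E_0=\pi\mathcal{O}\smallsetminus\pi^{d}\mathcal{O}$.
\item For $N<0$: $E_N=\pi^{-Nd}\mathcal{O}\smallsetminus\pi^{-Nd+d}\mathcal{O}$.
\end{itemize}
With the normalization $\mu(\mathcal{M})=1$ we have $\mu(\pi^k\mathcal{O})=q^{1-k}$. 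This yields $\mu(E_0)=1-q^{1-d}$ and $\mu(E_N)=q^{1+Nd}(1-q^{-d})$ for $N<0$.

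I expect the constants printed in the statement to need adjusting to these values; the sign and indexing conventions for the shells should be rechecked against Lemma \ref{lemma first approx}. As a consistency check, I will verify that summing over all $N\le 0$ gives total mass $1$.

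Finally, I will pass from almost every $z\in\mathcal{M}$ to almost every $z\in\hat K$. For arbitrary $z$ we have $T_P(z)\in\mathcal{M}$, and $n_{i+1}(z)=n_i(T_P z)$. A shift by one index does not change the limiting frequency, so it suffices to show that $T_P^{-1}(B)$ is $\mu$-null whenever $B\subset\mathcal{M}$ is null.

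This last point is the only potential obstacle. On each piece $\{n_1=n,f_1=f\}$ of $\hat K$, the map $T_P$ is the Möbius map $z\mapsto P^n/z-f$. Its inverse $w\mapsto P^n/(f+w)$ scales Haar measure by the constant factor $q^{nd-2\deg f}$, by the same computation as in the proof of Theorem \ref{lemma Haar invariante}, where $(f+w)^{-1}$ has constant valuation on $\mathcal{M}$. So the preimage of a null set is a countable union of null sets, hence null, and the proposition follows.
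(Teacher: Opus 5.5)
Your proposal is correct and follows the paper's route: the paper's proof is a one-line appeal to Birkhoff's ergodic theorem for the indicator of $\{n_1=N\}\cap\mathcal{M}$, using the invariance and ergodicity of $\mu$. Your explicit shells $E_0=\pi\mathcal{O}\smallsetminus\pi^{d}\mathcal{O}$ and $E_N=\pi^{-Nd}\mathcal{O}\smallsetminus\pi^{-Nd+d}\mathcal{O}$, with $\mu(E_0)=1-q^{1-d}$ and $\mu(E_N)=q^{1+Nd}(1-q^{-d})$, are right, as is your transfer from $\mathcal{M}$ to $\hat K$ (which the paper leaves implicit); the constants printed in the statement are indeed wrong, since the $N<0$ value grows without bound as $N\to-\infty$.
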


In particular, for Haar-almost every $z\in \hat K$, infinitely many exponents $n_i(z)$ are strictly negative. 

\begin{proof}[Proof of Theorem \ref{main theo 3}] By Proposition \ref{proposition primer retorno}, for all sufficiently large $r$ we have
\[-\sum_{i=1}^r n_i\asymp r\sum_{N=1}^\infty N\left (q^{-Nd}-q^{-(N+1)d}\right).\]

\noindent Evaluating the series, 

\[r\sum_{N=1}^\infty N\left (q^{-Nd}-q^{-(N+1)d}\right)=\dfrac{r}{q^d-1}.\]

\noindent We conclude $-\sum_{i=1}^r n_i$ is asymptotic to a positive constant depending only on $q$
and $d$. Therefore

\[-\sum_{i=1}^r n_i \asymp_{q,d} r. \]
    
\end{proof}

\section{Examples}\label{section examples}

\begin{Example}
    Let $F$ be a field with $\mathrm{char}(F)\neq 2$. Let $P\in F[t]$ be a polynomial with $\deg P\geq 2$, and let $f=bt+c$ be a linear polynomial. Define
    \[z=\dfrac{-f+\sqrt{f^2-4}}{2}\in F(\!(t^{-1})\!).\]

    \noindent We first determine the valuation of $z$. Note that 
\begin{equation}\label{definition of z}
z=\dfrac{1}{2}\left(-bt-c +bt\sqrt{\left(1+\dfrac{c}{bt} \right)^2-\dfrac{4}{b^2t^2}}\right),
\end{equation}
and the power series expansion of the square root is
\begin{equation}\label{expansion raiz}
\sqrt{\left(1+\dfrac{c}{bt} \right)^2-\dfrac{4}{b^2t^2}}=1+\dfrac{c}{bt}-\dfrac{2}{b^2t^2}+O(t^{-3}).
\end{equation}

\noindent By Equations \eqref{definition of z} and \eqref{expansion raiz}, we obtain
$z=-\frac{1}{bt}+O(t^{-2}).$ Thus $\nu(z)=1$, and consequently $n_0=0$. Observe that $z$ satisfies the quadratic equation
$x^2+fx-1=0.$
Equivalently,
$\frac{1}{z}-f=z.$
Therefore, $T_P(z)=z,$cwhich implies that 
$f_1(z)=f.$ It follows that the continued fraction expansion of $z$ is $[0,t,0,t,\ldots]$.  
\end{Example}
\section*{Acknowledgements}
We thank Luis Arenas-Carmona for valuable comments and for having read this manuscript.

M.A. was supported by ANID Fondecyt postdoctoral grant 3261344.

C.B. was partially supported by ANID Fondecyt Initiation Grant No. 11260422.


\bibliographystyle{abbrv}
\bibliography{refs.bib}

\end{document}